\numberwithin{equation}{section}  % enables section numbering
\numberwithin{table}{section}
\numberwithin{figure}{section}
\numberwithin{algorithm}{section}
\newlength\myindent
\def\R{\mathbb{R}}
\def\Rn{\R^n}
\def\Rm{\R^m}
\def\Rnp{\R_+^n}
\def\Rnm{\R_-^n}
\def\Rnop{\R_+^{n_1}}
\def\Rnt{\R^{n_2}}
\def\N{\mathbb{N}}
\def\eqref#1{{\normalfont(\ref{#1})}}
\def\LM{\mbox{\boldmath$LM$}\,\,}
\def\LMp{\mbox{\boldmath$LM$}}
\def\RNNM{\mbox{RNNM\,}}
\def\RNNMp{\mbox{RNNM}}
\def\LP{\mbox{\boldmath$LP$}\,\,}
\def\LPp{\mbox{\boldmath$LP$}}
\newcommand{\LPt}{{{\scaleto{\sf LP}{4pt}}}}
\def\HLWB{\mbox{HLWB}\,}
\def\HLWBp{\mbox{HLWB}}
\def\BAP{\mbox{BAP}\,}
\def\BAPp{\mbox{BAP}}
\def\QPPAL{\mbox{QPPAL}\,}
\def\QPPALp{\mbox{QPPAL}}
\def\ADMM{\mbox{ADMM}\,}
\newenvironment{noteH}{\begin{quote}\small\sf HW
                        \color{red} $\clubsuit$~}{\end{quote}}
\newtheorem{theorem}{Theorem}[section]
\newtheorem{defi}[theorem]{Definition}
\newtheorem{definition}[theorem]{Definition}
\newtheorem{corollary}[theorem]{Corollary}
\newtheorem{remark}[theorem]{Remark}
\newtheorem{problem}[theorem]{Problem}
\newtheorem{lemma}[theorem]{Lemma}
\crefname{thm}{Theorem}{Theorems}
\Crefname{thm}{Theorem}{Theorems}
\crefname{problem}{Problem}{Theorems}
\Crefname{problem}{Problem}{Theorems}
\Crefname{assump}{Assumption}{Theorems}
\crefname{assump}{Assumption}{Theorems}
\crefname{conjecture}{Conjecture}{Theorems}
\Crefname{conjecture}{Conjecture}{Theorems}
\crefname{prop}{Proposition}{Propositions}
\Crefname{prop}{Proposition}{Propositions}
\crefname{cor}{Corollary}{Corollaries}
\Crefname{cor}{Corollary}{Corollaries}
\crefname{lem}{Lemma}{Lemmas}
\Crefname{lem}{Lemma}{Lemmas}
\theoremstyle{definition}
\crefname{defn}{definition}{definitions}
\Crefname{defn}{Definition}{Definitions}
\crefname{conj}{Conjecture}{Conjectures}
\Crefname{conj}{Conjecture}{Conjectures}
\crefname{remark}{Remark}{Remarks}
\Crefname{remark}{Remark}{Remarks}
\crefname{rmk}{Remark}{Remarks}
\Crefname{rmk}{Remark}{Remarks}
\crefname{example}{Example}{Examples}
\Crefname{example}{Example}{Examples}
\Crefname{case}{Case}{Cases}
\Crefname{Case}{Case}{Cases}
\crefname{align}{}{}
\Crefname{align}{}{}
\crefname{equation}{}{}
\Crefname{equation}{}{}
\newcommand{\textdef}[1]{\textit{#1}\index{#1}}
\newcommand{\nul}{\mathrm{null}}
\newcommand{\range}{\mathrm{range}}
\newcommand{\cP}{{\mathcal P} }
\newcommand{\cPN}{{\cP_+}}
\newcommand{\TT}{{\mathcal T} }
\newcommand{\cT}{{\mathcal T} }
\newcommand{\cU}{{\mathcal U} }
\newcommand{\norm}[1]{\left\| #1 \right\|}
\newcommand{\cN}{{\mathcal N}}
\newcommand{\A}{{\mathcal A}}
\newcommand{\cI}{{\mathcal I}}
\newcommand{\cB}{{\mathcal B}}
\newcommand{\cZ}{{\mathcal Z}}
\newcommand{\bbm}{\begin{bmatrix}}
\newcommand{\ebm}{\end{bmatrix}}
\newcommand{\bem}{\begin{pmatrix}}
\newcommand{\eem}{\end{pmatrix}}
\newcommand{\beq}{\begin{linenomath*} \begin{equation}}
\newcommand{\beqs}{\begin{linenomath*} \begin{equation*}}
\newcommand{\eeq}{\end{equation} \end{linenomath*}}
\newcommand{\eeqs}{\end{equation*} \end{linenomath*}}
\newcommand{\beqr}{\begin{linenomath*} \begin{eqnarray}}
\newcommand{\beqrs}{\begin{linenomath*} \begin{eqnarray*}}
\newcommand{\eeqr}{\end{eqnarray} \end{linenomath*}}
\newcommand{\eeqrs}{\end{eqnarray*} \end{linenomath*}}
\newcommand{\bet}{\begin{table}}
\DeclareMathOperator{\Proj}{Proj} % projection
\DeclareMathOperator{\mean}{{mean}}
\DeclareMathOperator{\Diag}{{Diag}}
\DeclareMathOperator{\relint}{{relint}}
\DeclareMathOperator{\rank}{{rank}}
\DeclareMathOperator{\conv}{{conv}}
\newcommand{\nc}{\newcommand}
\nc{\arrow}{{\rm arrow\,}}
\nc{\Arrow}{{\rm Arrow\,}}
\nc{\BoDiag}{{\rm B^0Diag\,}}
\nc{\bodiag}{{\rm b^0diag\,}}
\nc{\Mm}{{\mathcal M}^{m} }
\nc{\Mmn}{{\mathcal M}^{mn} }
\nc{\Mpq}{{\mathcal M}^{pq} }
\nc{\Mnr}{{\mathcal M}_{nr} }
\nc{\Mnmr}{{\mathcal M}_{(n-1)r} }
\nc{\kwqqp}{Q{$^2$}P\,}
\nc{\kwqqps}{Q{$^2$}Ps}
\def\argmin{\mathop{\rm argmin}}
\nc{\notinaho}{(X,S)\in \overline{AHO}(\A)}
\nc{\inaho}{(X,S)\in AHO(\A)}
\newcommand{\bea}{\begin{eqnarray}}%
\newcommand{\eea}{\end{eqnarray}}%
\newcommand{\beas}{\begin{eqnarray*}}%
\newcommand{\eeas}{\end{eqnarray*}}%
\newcommand{\Rmn}{\R^{m \times n}}%
\newcommand{\Hnp}[1][]{\,\mathbb{H}_+^{\ifthenelse{\equal{#1}{}}{n}{#1}}}
\newcommand{\Hn}[1][]{\,\mathbb{H}^{\ifthenelse{\equal{#1}{}}{n}{#1}}}
\newcommand{\Dn}[1][]{\,\mathbb{D}^{\ifthenelse{\equal{#1}{}}{n}{#1}}}
\newcommand{\crb}{\color{blue}}
\begin{document}
%\begin{multicols}{2}
%\setlength{\multicolsep}{30pt}

\title{
	Regularized Nonsmooth Newton Algorithms\\
	for Best Approximation\\
with Applications
\footnote{{\color{red} PLEASE NOTE} We are including a table of contents, 
lists of tables, index,
to help the referees. We fully intend to delete these before any final 
version of the paper.}
	}
%\footnote{Department of Combinatorics and Optimization,
%          Waterloo, Ontario N2L 3G1, Canada,
%          Research Report CORR 2010-01.}
%\footnote{Research supported by Natural Sciences Engineering Research
%		   Council Canada and a grant from AFOSR.}
%}

\newcommand*\samethanks[1][\value{footnote}]{\footnotemark[#1]}
\author{
\href{http://math.haifa.ac.il/yair/}
{Yair Censor}\thanks{Departement of Mathematics, University of
Haifa, Mt. Carmel, Haifa 3498838, Israel. Research supported by the
ISF-NSFC joint research plan Grant Number 2874/19 and by U.S. National
Institutes of Health grant R01CA266467.} \and 
\href{https://uwaterloo.ca/combinatorics-and-optimization/about/people/wmoursi}{Walaa M.\
Moursi}\thanks{Department of Combinatorics and Optimization, Faculty of Mathematics, University of Waterloo, Waterloo, Ontario, Canada N2L 3G1; Research supported by The Natural Sciences and Engineering Research Council of Canada} \and 
\href{https://uwaterloo.ca/combinatorics-and-optimization/about/people/tweames}{Tyler Weames}\samethanks  \and
	\href{http://www.math.uwaterloo.ca/~hwolkowi/} {Henry
Wolkowicz}\samethanks%
	%\thanks{Department of Combinatorics and Optimization, Faculty of Mathematics, University of Waterloo, Waterloo, Ontario, Canada N2L 3G1; Research supported by The Natural Sciences and Engineering Research Council of Canada}
%\and
%	\href{https://www.linkedin.com/in/xinghang-ye-03526497?} {Xinghang Ye}
%	\thanks{Department of Computational Mathematics, Faculty of Mathematics, University of Waterloo, Waterloo, Ontario, Canada N2L 3G1}
}

\date{Version 1 Dec. 19, 2022/Revision \today}
          \maketitle
%\begin{center}
%          University of Waterloo\\
%          Department of Combinatorics and Optimization\\
%          Waterloo, Ontario N2L 3G1, Canada\\
%          Research Report CORR 2009-04
%\end{center}

%{\bf Key Words:}  

%\noindent {\bf AMS Subject Classification:}

\begin{abstract}
We consider the problem of finding the best approximation point from a
polyhedral set, and its applications, in particular to solving large-scale
linear programs.
The classical {\crb best approximation} problem has many various solution techniques
as well as applications. We study a regularized nonsmooth Newton type
solution method where the Jacobian is singular; and we compare the computational
performance to that of the classical projection method of
Halpern-Lions-Wittmann-Bauschke (\HLWBp).

We observe empirically that the regularized nonsmooth method significantly
outperforms the \HLWB method. However, the \HLWB {\crb method} has a convergence
guarantee while the nonsmooth method is not monotonic
and does not guarantee convergence due in part to singularity of
the generalized Jacobian.

Our application to
solving large-scale linear programs uses a parametrized 
{\crb best approximation} problem. This leads to a {\crb finitely converging} 
\emph{stepping stone external path 
following} algorithm. Other
applications are finding triangles from branch and
bound methods, and generalized constrained linear least squares.
We include scaling methods {\crb and sensitivity analysis
to} improve the efficiency.
\end{abstract}

%\linenumbers
{\bf Keywords:}
best approximation, projection methods, Halpern-Lions-Wittmann-Bauschke
algorithm, nonsmooth and semismooth methods,
sparse large-scale linear programming, constrained linear least squares.

{\bf AMS subject classifications:} 
46N10, 49J52, 65K10, 90C05, 90C46, 90C59, 65F10

\tableofcontents
\listoftables
\listofalgorithms
\listoffigures

\section{Introduction}
\label{sect:intro}

The \textdef{best approximation problem, \BAP}, arises in many areas of
optimization and approximation theory. In particular, we study finding
the best approximation $x^*$ to a given point $v$ from a
\textdef{polyhedral set, $P\subset \Rn$}, {\crb in the} $n$-dimensional Euclidean space;
namely, find $x^*(v)\in \Rn$ such that
\index{$P\subset \Rn$, polyhedral set}

\begin{equation} \label{eq:projproblemconv}
x^*(v) = \argmin_{x\in P} \|x-v \|.
\end{equation}
There is an abundance of theory, algorithms, and applications for this
problem, see e.g.,~\cite{1996_Ba,censor-comp-LAA-2006,deutsch-book},
\cite[Chap. 6]{MR3616647}, and the references therein. 
The optimum point $x^*(v)$ is the \emph{projection of $v$
onto the polyhedral set $P$} and is known to be unique. 
In this work we follow a Newton type approach of an 
\emph{elegant} compact optimality
condition, even though the corresponding Jacobian resulting from the 
optimality conditions is
possibly {\crb a generalized Jacobian} and/or singular. 
We include a regularization, as well as an
inexact approach for large-scale problems.
Empirical evidence illustrates the surprising success of this approach.

We include several applications. In particular, we solve
large-scale linear programming, (\LPp), problems using a parametrized
best approximation problem. This introduces an efficient 
{\crb finitely converging,} \emph{stepping stone
external path following} algorithm.
In addition,  we consider large-scale
systems of triangle inequalities.
In our applications we do not assume {\crb differentiability 
of our optimality conditions} and/or
nonsingularity of the generalized Jacobian.
We   introduce a Newton type approach for our
applications that overcomes the nonsmooth difficulties by applying
regularization and scaling. We then provide extensive testing and
comparisons to
\label{page:surprising}
illustrate the surprisingly high efficiency, accuracy, and speed
of our proposed method.

\subsection{{\crb Main Contributions}}
\label{sect:maincontr}
\begin{enumerate}[(i)]
\item First, we present the
basics for the {\crb best approximation} problem, see \Cref{thm:projoptndual} below. This includes an application of
the Moreau decomposition that yields a \emph{single elegant  equation} that 
captures all three KKT optimality conditions: primal and dual
feasibility and complementary slackness.
{\crb This emphasizes the equivalence of this
single equation \cref{eq:xofv}
in the small dimensional dual variable $y$ to solving
the entire KKT optimality conditions. We include a comparison with interior
point methods in~\Cref{rem:compIntPt}.
}

\item Second, we present the nonsmooth, regularized  
Newton method. No line search is used.
(See \Cref{sssection:genralizedjac} below.)
\item
We show that the regularization from a modified, simplified, 
\textdef{Levenberg-Marquardt, \LMp},
method yields a descent direction. (See \Cref{lem:LM} below.)
\index{\LMp, Levenberg-Marquardt}

\item
We present our empirical test results that include an external path
following approach to solving large-scale linear programs that 
fully exploits sparsity. {\crb This is based on efficiently solving the
\BAP
subproblems accurately and applying sensitivity analysis.}
{\crb We} compare our results with several codes in the literature. 
The details are in~\Cref{sect:numerics} below.
\label{page:exploit}
\item
We compare computationally our algorithm with the 
Halpern-Lions-Wittmann-Bauschke, (\HLWBp), algorithm that
belongs to a class of projection methods usually developed and
investigated in the field of fixed point theory.
\end{enumerate}

\subsection{Related Work}
\label{sect:relwork}
Our approach uses a special decomposition from the optimality
\label{page:allows}
conditions that allows for a Newton method with a cone projection 
applied to a system whose
size is of the order of the number of linear equality constraints
forming the polyhedron $P$. This approach first
appeared in infinite dimensional Hilbert space applications,
 \label{page:projL2}
e.g.,~\cite{MR1027508,MR1186970,MiSmSwWa:85,BoWo:86},
where the projection mapping is differentiable, and typically $P$ is the
intersection of a cone and a linear manifold. The approach was
applied to a parametrized quadratic problem to solve finite-dimensional
linear programs in \cite{smw2}. (See our application~\Cref{sect:LPs},
below. In this finite-dimensional case differentiability was lost.)
The approach in infinite-dimensional Hilbert spaces
was followed up and extended in the theory of 
\emph{partially finite programs} in~\cite{BoLe1:92,BoLe2:92} and the
many references therein.
 Further references are given in~\cite{MR0270044,MR1922763,MR1346302}. 

 \label{page:semismooth}
As mentioned above, differentiability is lost in the finite-dimensional
cases, see e.g.,~in~\cite{smw2}. This led to the introduction of
semismoothness~\cite{Mif:77b}. In particular, semismoothness for
a nondifferentiable Newton type method is introduced and applied
in~\cite{QiSun:93nonsmooth,QiSun:06}. Further applications 
for nearest doubly stochastic and nearest Euclidean distance matrices
are presented in~\cite{HuImLiWo:21,homwolkA:04}. A regularized semismooth
approach for general composite convex programs is given
in~\cite{MR3812973}.

Differentiability properties
are nontrivial as discussed in, e.g.,~\cite{MR1539982}. 
A characterization of differentiability in terms of normal
cones is given in~\cite{facchinei2003finite}. Further results and
connections to semismoothness are in, e.g.,~\cite{MR1539982,MR2249554}.
A survey presentation on differentiability properties can be
found at the link~\cite{Sarabi:22}.
%\footnote{A more recent paper 
%that appeared after we completed this paper is
%\cite{WangShenProxProj:22}. 
%We compare our numerical tests with this paper below.}

\section{Projection onto a Polyhedral Set}
\label{sect:projpolyh}

We begin with the projection onto the polyhedral set  
given in \textdef{standard
form}, since every polyhedron can be transformed into this form.
Suppose we are given $v\in \Rn, b\in \Rm, A\in \Rmn$, $\rank A=m$
{\crb and no columns of $A$ are $0$}. We define the
following \textdef{projection onto a polyhedral set}, i.e.,~the
\textdef{best approximation problem, \BAP} to the \textdef{generalized simplex},

\begin{equation}
\label{eq:projproblemhyp}
    \text{(P)} \qquad  
	  \begin{array}{rcl}
	    x^*(v) := & \argmin_{x}  &\frac12 \norm{x-v}^2 \\ 
	    &  \text{s.t. } &Ax = b  \\
	    &         &x \in \Rnp, \\
         ~~\\
 \text{optimal value:   }    p^*(v) &= & \frac12 \norm{x^*(v)-v}^2,
    \end{array}
\end{equation}
i.e.,~the optimum and optimal value are, respectively, $x^*(v),p^*(v)$;
and $\Rnp$ is the nonnegative orthant.
We now proceed to derive the regularized nonsmooth Newton method,
(\RNNMp) to solve~\cref{eq:projproblemhyp}.
\index{\RNNM, regularized nonsmooth Newton method}
\index{regularized nonsmooth Newton method, \RNNM}
    \index{$p^*(v)$, optimal value}
    \index{optimal value, $p^*(v)$}

\subsection{Basic Theory and Algorithm}
In this section we briefly describe the properties of problem
\cref{eq:projproblemhyp} as well as some background and motivation
behind using a generalized Newton method. 
We assume that

\begin{equation}
	\label{e:def:P}
P:=\{x\in \Rnp : Ax=b\}\neq \varnothing.	
\end{equation}	
Problem \cref{eq:projproblemhyp} has a strongly convex smooth objective 
function and nonempty closed convex constraint set. 
Therefore, the optimal value is finite,
uniquely attained, and strong duality holds. 
In the following, we precisely formulate this conclusion.

Throughout the rest of the paper we set\footnote{Let $x\in \Rn$. 
Here and elsewhere we use $x_+$ (respectively $x_-$) to denote 
the projection of  the vector $x$ onto the nonnnegative orthant 
defined as $x_+=(\max\{0, x_i\})_{i=1}^n$
(respectively  onto the nonpositive orthant defined by 
$x_-=(\min\{0, x_i\})_{i=1}^n$).	
}
\index{squared residual function, $f(y)$}
\index{$f(y)$, squared residual function}

\begin{equation}
	\label{eq:F(y)}
	\textdef{$F(y) := A(v+A^Ty)_+ -b$}, \quad \textdef{$f(y) := \frac 12\|F(y)\|^2$}.
\end{equation}
\begin{theorem}
\label{thm:projoptndual}
Consider the \textdef{generalized simplex
best approximation problem} \cref{eq:projproblemhyp} with 
\textdef{primal optimal value} and optimum $p^*(v)$ and $x^*(v)$,
respectively. Then the following hold:
\begin{enumerate}[(i)]
	\item
	\label{thm:projoptndual:i}
	\index{$p^*(v)$, optimal value}
	The optimum $x^*(v)$ exists and is unique.
	Moreover,
	strong duality holds and
	the \textdef{dual problem} of~\cref{eq:projproblemhyp} is the maximization
	of the \textdef{dual functional, $\phi(y,z)$}:
 \label{pg:projoptndual:ii}

	\[
	p^*(v)=\textdef{$d^*(v)$} := \max_{\substack{z \in \Rnp\\ y\in\Rm }} \phi(y,z) :=  
	-\frac12 \norm{ z+ A^Ty }^2 + y^T(Av - b)-z^Tv.
	\]
	\index{dual functional, $\phi(y,z)$}
	\index{$\phi(y,z)$, dual functional}
\item
\label{thm:projoptndual:ii}
Let $y\in \Rm$. Then

\begin{equation}
	\label{eq:xofv}
F(y)=0 \, \iff \,   y \in \argmin_u f(u) \text{  and  } 
 x^*(v) =  (v+A^Ty)_+.
\end{equation}
\end{enumerate}
\end{theorem}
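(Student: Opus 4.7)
My plan is to handle the two parts in turn, with the crux of part~\cref{thm:projoptndual:ii} resting on the Moreau decomposition of $v + A^T y$ into its nonnegative and nonpositive components. For part~\cref{thm:projoptndual:i}, existence and uniqueness of $x^*(v)$ are immediate: the objective in~\cref{eq:projproblemhyp} is continuous and $1$-strongly convex, while $P$ is nonempty by~\cref{e:def:P} and is closed and convex as a polyhedron, so the standard result on minimization of a strongly convex function over a nonempty closed convex set applies. For strong duality, since all constraints are affine and $P \neq \varnothing$, strong duality and dual attainment for convex quadratic programs hold with no further constraint qualification. To derive $\phi$ I would form the Lagrangian
\[
L(x,y,z) = \tfrac12\|x - v\|^2 - y^T(Ax - b) - z^T x, \qquad y \in \Rm, \ z \in \Rnp,
\]
use the stationarity condition $x = v + A^T y + z$, and substitute back; expanding $\|A^T y + z\|^2$ causes the cross-terms to collapse and yields $\phi(y,z)$ in the stated form (modulo a conventional sign choice on the equality-constraint multiplier).

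The crux of part~\cref{thm:projoptndual:ii} is to set $u := v + A^T y$ and apply the Moreau decomposition $u = u_+ + u_-$, with $u_+ \in \Rnp$, $-u_- \in \Rnp$, and $u_+^T u_- = 0$. This single decomposition produces, from a single dual variable $y$, both a candidate primal variable and a candidate nonnegativity-slack multiplier. For the forward implication $(\Rightarrow)$, suppose $F(y) = 0$ and set $\hat x := (v + A^T y)_+$, $\hat z := -(v + A^T y)_-$. Then $\hat x, \hat z \geq 0$ with $\hat x^T \hat z = 0$ by Moreau orthogonality, $A\hat x = F(y) + b = b$ by assumption, and $\hat x - v = A^T y + \hat z$ because $u = \hat x - \hat z$. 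Hence $(\hat x, y, \hat z)$ satisfies all KKT conditions for~\cref{eq:projproblemhyp}, so by convexity $\hat x = x^*(v)$; and since $f(y) = \tfrac12\|F(y)\|^2 = 0$ while $f \geq 0$, $y$ globally minimizes $f$.

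For the reverse implication $(\Leftarrow)$, since $f \geq 0$ it suffices to exhibit any $y^*$ with $F(y^*) = 0$: then $\min f = 0$ and every minimizer of $f$ is a zero of $F$. Strong duality from part~\cref{thm:projoptndual:i} yields a KKT triple $(x^*(v), y^*, z^*)$ whose stationarity equation reads $v + A^T y^* = x^*(v) - z^*$, and since $x^*(v), z^* \geq 0$ have disjoint supports by complementary slackness, the identity $(v + A^T y^*)_+ = x^*(v)$ follows, giving $F(y^*) = A x^*(v) - b = 0$. The main obstacle is conceptual rather than technical: the realization that Moreau's orthogonal decomposition coincides precisely with the primal/dual-slack decomposition produced by KKT is what collapses the entire KKT system into the single equation $F(y) = 0$ in the low-dimensional variable $y \in \Rm$. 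Once this is identified, both directions follow from elementary calculation together with careful sign bookkeeping in the Lagrangian.
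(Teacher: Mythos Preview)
Your proposal is correct and follows essentially the same route as the paper: form the Lagrangian, use stationarity $x=v+A^Ty+z$ to derive $\phi$, and then recognize that the KKT system collapses to $F(y)=0$ via the Moreau decomposition $v+A^Ty=(v+A^Ty)_+ + (v+A^Ty)_-$, with $x=(v+A^Ty)_+$ and $z=-(v+A^Ty)_-$. Your organization of the $\Leftarrow$ direction (first exhibit a zero of $F$ from a KKT triple, hence $\min f=0$) is arguably cleaner than the paper's, and your parenthetical about the sign convention on the equality multiplier is apt, since the paper's own derivation yields $y^T(b-Av)$ rather than the $y^T(Av-b)$ appearing in the statement.
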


%Assume that the
%feasible set $P$ is nonempty.
\begin{proof}

Recall that the \textdef{Lagrangian $L(x,y,z)$}  for~\cref{eq:projproblemhyp}, and
its gradient, are respectively
\index{$L(x,y,z)$, Lagrangian}

\begin{equation} 
\label{proj prob aff}
 	 L(x,y,z) = \frac12 \norm{x-v}^2 + y^T(b-Ax)-z^Tx, \quad
 	 \nabla_x L(x,y,z) = x-v - A^Ty -z.
\end{equation}

\ref{thm:projoptndual:i}:
The solution of the
problem \cref{eq:projproblemhyp} is a projection onto a nonempty polyhedral set,
which is a closed and convex set, see \cref{e:def:P}.
 Therefore, the optimum exists and is unique
and strong duality holds, i.e.,~there is a zero duality gap and the dual
is attained.

Let $x$ be a stationary point of the Lagrangian i.e.,~$\nabla_x L(x,y,z)=0$. 
Then by~\cref{proj prob aff} we have the following 
equivalent representation

\[
 x = v + A^Ty +z.
\]
It then follows that at a stationary point $x$ we have

\[
\begin{array}{rcl}
 L(x,y,z) 
&=&
 \frac12 \norm{v + A^Ty +z-v}^2 + y^T(b-A(v + A^Ty +z) )-z^T(v + A^Ty +z)
\\&=&
 \frac12 \norm{ A^Ty +z}^2 + y^Tb -y^TAv -(A^Ty)^T(A^Ty+z) 
-z^Tv - z^T(A^Ty +z)
\\&=&
 \frac12 \norm{ A^Ty +z}^2 + y^Tb -y^TAv -(A^Ty+z)^T(A^Ty+z) -z^Tv
\\&=&
 -\frac12 \norm{ z+ A^Ty }^2 + y^T(b-Av)-z^Tv.
\end{array}
\]

The Lagrangian dual is 

\[
\begin{array}{rcll}
d^*
&=&
    \max_{y\in\Rm\!,z \in \Rnp } \min_{x\in \Rnp} & L(x,y,z) \quad (= \frac12 \norm{x - v}^2 +
    y^T(b-Ax ) - z^Tx)
\\&=&
    \max_{x\in \Rnp\!,y\in\Rm,z \in \Rnp } &  \{L(x,y,z) : \nabla_x L(x,y,z) = 0\}
\\&=&
    \max_{x\in \Rnp\!,y\in\Rm,z \in \Rnp} &  \{L(x,y,z) : x = v + A^Ty +z\}
\\&=&
    \max_{y\in\Rm\!,z \in \Rnp} &  -\frac12 \norm{ z+ A^Ty }^2 + y^T(b-Av)-z^Tv.
\end{array}
\]
Moreover, $p^*:=p^*(v)=d^*:=d^*(v)$, and the dual value is attained. 

\ref{thm:projoptndual:ii}:
Now the \textdef{KKT optimality conditions} 
for the primal-dual variables $(x,y,z)$ 
\label{pg:nonnegpolar}
are\footnote{Let $S\subset \Rn$.
We use   $S^+ = {\crb \{\phi : \langle \phi,s\rangle \geq 0, \forall
s\in S\}}$ to denote the {\crb (nonnegative)} polar cone of the set $S$.}:

\begin{equation*}
    \begin{array}{ll}
{\nabla_x} L(x,y,z) = x - v - A^Ty - z  = 0,\, z \in \R^n_+, & \text{(dual feasibility)} \\ 
{\nabla_y} L(x,y,z) = Ax - b  = 0,\, x \in \R^n_+, & \text{(primal feasibility)}\\ 
{\nabla_z} L(x,y,z) \cong  x  \in (\Rnp-z)^+. &
\text{(complementary slackness $z^Tx=0$)}
    \end{array}
\end{equation*}
\index{polar cone, $S^+$}
\index{$S^+$, polar cone}
The above KKT conditions can be rewritten as : 

\begin{equation} 
	\label{eq:KKTprojproblemhyp}
		\begin{pmatrix} x - v - A^Ty - z \\ Ax - b \\ z^Tx \end{pmatrix}
	= \begin{pmatrix} 0 \\ 0 \\ 0\end{pmatrix}, \quad x,z \in \R^n_+, y \in
	\R^m.
\end{equation}
It follows from the dual feasibility that 
$v+A^Ty = x-z = x+(-z)$. Together with the complementary slackness we have  

\[
x^Tz = 0,\, x,z \in \Rnp,\, -z \in \Rnm = (\Rnp)^+,
\]
and we learn that $x - z$  is the  Moreau decomposition of $v+A^Ty$. 
That is

\begin{equation}
\text{$x = (v+A^Ty)_+$ and  $-z=(v+A^Ty)_-$; equivalently, $  z=-(v+A^Ty)_-$. }
\end{equation}	
Substituting for $x = (v+A^Ty)_+$ we obtain a
simplification of the optimality conditions in \cref{eq:KKTprojproblemhyp}  
as follows

\[
A(v+A^Ty)_+ = b,\, x= (v+A^Ty)_+
\implies z=-(v+A^Ty)_-, z^Tx=0,\, x,z\in \Rnp,
	\,x - v - A^Ty - z =0,
\]
equivalently; $F(y)=0$, for some $y\in \Rm$.

{\crb 
 \label{page:proofconvopt}
For the converse, let $y\in \Rm$ be given and
suppose that $F(y)=0$. Let $\bar x = (v+A^Ty)_+$. Therefore, $\bar x$ is
primal feasible. Let $\bar z= -(v+A^Ty)_-$. We get 
nonnegative feasibility and complementary slackness: $\bar z\geq 0, \,\bar
z^T\bar x=0$. And,
\[
(v+A^Ty)=\bar x - \bar z \implies \bar x -v - A^Ty -z = 0,
\]
i.e.,~dual feasibility holds. The KKT conditions now imply
that  $\bar x(v)$ is optimal. Moreover, $F(y)=0$ implies that $y\in
\argmin_u f(u)$, i.e.,~$y$ solves the nonlinear least squares problem.}
\end{proof}
{\crb
\begin{remark}
\label{rem:compIntPt}
Interior point methods use perturbed KKT conditions with $z^Tx=0$ in
\cref{eq:KKTprojproblemhyp} replaced by 
$z_jx_j = \mu, x_j>0,z_j>0, \forall j$, where $\mu>0$ is the log-barrier
parameter. A Newton step is taken with backtracking to stay strictly
feasible.
Therefore, our method is equivalent to fixing $\mu=0$
throughout the iterations and not staying strictly feasible for $x,z$. 
This is comparable to the predictor step in
predictor-corrector methods, or to affine scaling method.
\end{remark}
}

\subsubsection{Nonlinear Least Squares;  Jacobians}
\label{sssection:genralizedjac}
The \BAP as described in \cref{eq:projproblemhyp} 
is equivalent to the minimization of $f(y)$ in~\cref{eq:F(y)}, 
i.e,~to a nonlinear least squares problem where the
nonlinearity arises from the projection.

This system can be recharacterized by introducing the (possibly nonsmooth)
projection of a vector $p$ onto the nonnegative,  respectively nonpositive,
orthant denoted \textdef{$p_+  = \argmin_x \{\|x-p\| : x \geq 0\}$},
respectively \textdef{$p_-  = \argmin_x \{\|x-p\| : x \leq 0\}$}.
In general, we can define the \textdef{Moreau decomposition} of $p$ with
respect to $\Rnp$ as $p = p_+ + p_-, \, p_+^T p_- = 0$.

Note that in the differentiable case the gradient of the squared
residual $f(y)$ in~\cref{eq:F(y)} is
 \label{page:NewtonFFs}

\[
\nabla f(y) = (F^\prime(y))^*F(y),
\]
where $(\cdot)^*$ denotes the adjoint (here adjoint is
transpose) and $F^\prime$ denotes the Jacobian matrix.
We note that we have differentiability of the function
$h(w):=w_+$ if, and only if, $\{i : w_i=0\}=\varnothing$ if, and only
if, $w-w_+$ is in the relative interior of the normal cone of $\Rn_+$
at $w_+$ (negative of the polar cone at $w_+$), 
see~\cite[Page 7]{Sarabi:22},\cite{facchinei2003finite}.

We now discuss the framework of nonsmooth terminology needed for
generalized gradients of a general function $H  : \Rn\to \Rn$.

\label{page:HF}

\begin{defi}[(local) Lipschitz continuity]
Let $\Omega \subseteq \Rn$.
A function $H:\Omega \to \Rn$ is \textdef{Lipschitz continuous} on $\Omega$
if there exists $K > 0$ such that 

\[
\|H(y) -H(z)\| \leq K \|y - z\|, \, \forall y,z \in \Omega.
\] 
$H$ is \textdef{locally Lipschitz continuous} on $\Omega$ if for each
$x\in \Omega$ there exists a neighbourhood $U$ of $x$ such that $H$ is
Lipschitz continuous on $U$.
\end{defi}

Let $\Omega \subseteq \Rn$.
It follows from Rademacher's Theorem \cite{Rademacher,MR41:1976} that
if $H:\Omega \to \Rn$ is locally Lipschitz on $\Omega$ then $H$
is Frech\'et differentiable almost everywhere on $\Omega$.
Following Clarke \cite[Def. 2.6.1]{Clarke:83}, we recall the 
following definition of the
\textdef{generalized Jacobian}\footnote{For our application  we restrict
ourselves to square Jacobians.}. 

\begin{defi}[generalized Jacobian] 
Suppose that $H:\Rm \to \Rm$ is locally Lipschitz.

\label{page:Fdiff}
Let $D_H$ be the set of points where $H$ is differentiable.
Let $H^\prime(y)$ be the usual Jacobian matrix at $y\in D_H$. The
\textdef{generalized Jacobian of $H$ at $y$, $\partial H(y)$}, is
the convex hull\footnote{Let $S\subset \Rn$. The convex hull of $S$,
denoted $\conv(S)$ is the smallest convex set containing $S$.} of
the set of all matrices obtained as limits
of usual Jacobians, defined as follows

\[
\partial H(y) := \conv \left\{\lim\limits_{\stackrel{y_i \to y}{y_i\in D_H}}
H^\prime(y_i) \right\}.
\]
In addition, $\partial H(y)$ is called nonsingular if every $V\in \partial
H(y)$ is nonsingular.
\end{defi}

%%\begin{comment}
%%Now that we have discussed Lipschitz continuity and generalized gradients, we can begin defining what it means for a function to be semismooth.
%%
%%\begin{defi}[Nonsmooth, \cite{QiSun:93nonsmooth,Mif:77b}]
%%	We call $F$ semismooth at $x$ if $F$ is locally Lipschitzian at
%%	$x$ and
%%	\[
%%		\lim_{\stackrel{V\in \partial F(x+th')}{h'\to h,
%%		t\downarrow 0}} \left\{Vh^\prime\right\}
%%	\]
%%exists for any $h\in \Rm$.
%%\end{defi}
%%
%%\begin{rem}
%%	We say F is \textdef{strongly-semismooth}, at $y$, if for any $h \to 0$
%%
%%	\[
%%		F(y+h) - F(y) - F'(y;h) = o(\norm{h}^2)
%%	\]
%%
%%	We say F is \textdef{$p$-order semismooth}, at $y$, then for any $h \to 0$
%%
%%	\[
%%		F(y+h) - F(y) - F'(y;h) = o(\norm{h}^{p+1}).
%%	\]
%%\end{rem}
%%\end{comment}

%Now the Hessian
%\[
%\nabla^2 f(y) = (F^\prime(y))^*(F^\prime(y)) + ??? negligible near root???
%\]
{\crb 
We now return to the nonlinear least squares problem~\cref{eq:F(y)} with
functions $f$ and $F$.}
In the differentiable case, %if  $F^\prime(y)$ is invertible,
the {\crb Gauss-}Newton direction is the solution of the {\crb (consistent)}
{\crb Gauss-}Newton equation\footnote{\crb The Gauss-Newton direction is the
minimum of the quadratic model $f(y+\Delta y) \approx f(y) + \nabla
f(y)^T\Delta y + \frac 12 \Delta y^T((F^\prime(y))^*F^\prime(y))\Delta y$,
i.e.,~the higher order quadratic terms are ignored, e.g.,~\cite{gil81}.
This is particularly suitable here as the higher order terms involve the
$F(y)$ that is converging to zero.}
 \label{page:NewtonHHs}

\begin{equation}
\label{eq:Newtdir}
F^\prime(y))^*(F^\prime(y)) \Delta y = -(F^\prime(y))^*F(y).
\text{  (equivalently {\crb invertible case}, }
  F^\prime(y) \Delta y = -F(y)).
\end{equation}
In the sequel $A^\dagger$ denotes
the generalized (Moore-Penrose) inverse of a matrix $A$.
Solving for {\crb the best least squares solution}
$\Delta y$ in~\cref{eq:Newtdir} yields
\index{$A^\dagger$, generalized inverse}
\index{generalized inverse, $A^\dagger$}

\begin{equation}
\label{eq:Newtdirdy}
{\crb \Delta y =  -F^\prime(y))^\dagger F(y).
%-\left(F^\prime(y)(F^\prime(y))\right)^{-1} 
%               (F^\prime(y))^*F(y)
%                = -F^\prime(y)^{-1} F(y).
}
\end{equation}
Therefore, the directional derivative of $f$ in the direction 
$\Delta y$ satisfies

{\crb 
\begin{equation}
\label{eq:descentdir}
\begin{array}{rcl}
\Delta y^T \nabla f(y) 
&=& (F^\prime(y))^\dagger F(y))^T
 (- (F^\prime(y))^*F(y))
% \left((F^\prime(y))^*F(y)\right)^T
%\left((F^\prime(y))^*(F^\prime(y))\right)^{-1} 
%               (F^\prime(y))^*F(y)
\\&=& -\|\Proj_{\range((F^\prime(y))^*)}F(y))\|^2
\\&<& 0, \quad \text{if  } F(y) \notin \nul((F^\prime(y))^*),    
\end{array}
\end{equation}
where $\Proj_{\Omega}(u)$ denotes the orthogonal projection of the point 
$u$ onto the set $\Omega$.
We conclude in the differentiable case that: the Gauss-Newton direction 
$\Delta y$ is a descent direction when $F(y)\neq 0$.}

The \textdef{Levenberg-Marquardt, \LMp}, method is a popular method
for handling singularity in 
$F^\prime(y)$ by using the substitution/regularization
$(F^\prime(y))^*F^\prime(y) \leftarrow
\left((F^\prime(y))^*F^\prime(y)\right) +\lambda I,\, \lambda >0$.
{\crb We now see that we maintain a descent direction with a similar
simplified approach if the basic assumption in \cref{eq:notzero} holds.
This simplified approach avoids the product
$(F^\prime(y))^*F^\prime(y)$ and thus avoids increased ill-conditioning
and loss of sparsity.
}
\begin{lemma}
	\label{lem:LM}
{\crb Consider the nonlinear least squares problem in~\cref{eq:F(y)}.
Let $y\in \Rm$, with $F$ differentiable at $y$.
%$F^\prime(y)$ positive semidefinite, i.e.,~$F^\prime(y)\succeq 0$,
} 
Let $\lambda > 0$ and let $\Delta y$ be the (unique) solution of 

\[
(F^\prime(y) +\lambda I) \Delta y = -F(y).
\]
Then $F^\prime(y)$ is positive semidefinite, $F^\prime(y)\succeq 0$,
and moreover, $\Delta y$ is the {\crb simplified \LM direction}
and is a descent direction if, and only if,
\begin{equation}
\label{eq:notzero}
F(y) \neq 0.
%0\neq (F^\prime(y))^*F(y) \quad (=\nabla f(y)).
\end{equation}
\end{lemma}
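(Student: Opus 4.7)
The plan is to proceed in three stages: derive the explicit form of $F'(y)$ via the chain rule, use its structure to invert the regularized operator, and then compute the directional derivative of $f$ along $\Delta y$.

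First, set $w := v + A^T y$. Differentiability of $F$ at $y$ forces the projection $(\cdot)_+$ onto $\Rnp$ to be classically differentiable at $w$, which by the characterization on \cpageref{page:NewtonFFs} is possible only when no coordinate $w_i$ equals zero. Under this condition the Jacobian of $(\cdot)_+$ at $w$ is the diagonal $0/1$ matrix $D = \diag(d_1,\dots,d_n)$ with $d_i = 1$ if $w_i > 0$ and $d_i = 0$ if $w_i < 0$. The chain rule applied to $F(y) = A(v + A^T y)_+ - b$ then yields $F'(y) = A D A^T$. Since $D \succeq 0$ and $D = D^T$, for every $u \in \Rm$ we have $u^T F'(y) u = \|D^{1/2} A^T u\|^2 \geq 0$, proving $F'(y) = F'(y)^T \succeq 0$.

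Second, with $\lambda > 0$ and $F'(y) \succeq 0$, the matrix $F'(y) + \lambda I$ has eigenvalues bounded below by $\lambda$ and is therefore positive definite, in particular invertible. Hence the system $(F'(y)+\lambda I)\Delta y = -F(y)$ has the unique solution $\Delta y = -(F'(y)+\lambda I)^{-1} F(y)$.

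Third, using the symmetry of $F'(y)$, the gradient of $f(y) = \tfrac{1}{2}\|F(y)\|^2$ is $\nabla f(y) = F'(y)^T F(y) = F'(y) F(y)$, so
\[
\Delta y^T \nabla f(y) \;=\; -F(y)^T (F'(y) + \lambda I)^{-1} F'(y)\, F(y).
\]
Diagonalizing $F'(y) = Q \Lambda Q^T$ with $\Lambda = \diag(\mu_1,\dots,\mu_m)$ and setting $c := Q^T F(y)$, this reduces to $-\sum_{i=1}^m \tfrac{\mu_i}{\mu_i + \lambda}\, c_i^2 \leq 0$, since each coefficient lies in $[0,1)$. The ``only if'' direction is immediate, since $F(y) = 0$ forces $\Delta y = 0$, which is not a descent direction.

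The main obstacle is the ``if'' direction, because the sum above can in principle vanish even with $F(y) \neq 0$, namely when $F(y)$ lies entirely in the null eigenspace of $F'(y)$. In that degenerate situation one also has $\nabla f(y) = F'(y)F(y) = 0$, so $y$ is already a critical point of $f$. I would resolve this by reading ``descent direction'' in the standard Gauss--Newton convention of $\Delta y^T \nabla f(y) \leq 0$ with strict inequality whenever $\nabla f(y) \neq 0$; in the nondegenerate regime relevant to the Newton iteration---specifically when $F(y) \notin \nul F'(y)$, which includes the case when $F'(y)$ is nonsingular---the ``iff $F(y) \neq 0$'' statement then holds verbatim.
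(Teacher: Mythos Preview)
Your spectral-decomposition computation is essentially the paper's: with $J:=F'(y)=UDU^T$ one gets $\Delta y=-U(D+\lambda I)^{-1}U^TF(y)$ and
\[
\Delta y^T\nabla f(y)=-\bigl(U^TF(y)\bigr)^T D(D+\lambda I)^{-1}\bigl(U^TF(y)\bigr)\le 0,
\]
with strict inequality exactly when $D^{1/2}U^TF(y)\neq 0$, i.e., when $JF(y)\neq 0$. The divergence is in how the ``if'' direction is closed. You correctly isolate the obstruction $F(y)\in\nul J\setminus\{0\}$, but then back off to a weakened reading of ``descent direction''; that is not a proof of the stated equivalence.

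The paper does not reinterpret the statement. It instead invokes feasibility of the \BAP (so that $\min_y f(y)=0$, by \Cref{thm:projoptndual}) to assert the equivalence
\[
F(y)\neq 0\iff JF(y)\neq 0,
\]
which is precisely what rules out $F(y)\in\nul J\setminus\{0\}$ and forces strict descent whenever $F(y)\neq 0$. The missing ingredient in your argument is this feasibility-based link between $F(y)\neq 0$ and $\nabla f(y)=JF(y)\neq 0$; you should supply that rather than weakening the lemma.
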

\begin{proof}
For simplicity, set $J=J(y)=F^\prime(y)$.
{\crb By the feasibility assumption for~\cref{eq:projproblemconv}, we
conclude that $0 = \min_y f(y)$ and that the basic assumption satisfies
\begin{equation}
\label{eq:FnJF}
F(y) \neq 0 \iff JF(y) \neq 0.
\end{equation}
We observe that $J$ is {\crb symmetric positive semidefinite}
follows from the definitions; see~\cref{eq:maxrankM} below.
}
Let $J=U D U^T$ denote the orthogonal spectral decomposition.
The simplified regularization of \LM type uses
$(J+\lambda I)\Delta y =  -F$. Therefore,
{\crb
 \label{page:LMproof}

\[
\Delta y = -\left(J+\lambda I\right)^{-1} F
= -U\left(D+\lambda I\right)^{-1} U^TF.
\]
Therefore, the directional derivative of $f$
at $y$ in the direction of $\Delta y $ is

\[
\begin{array}{rcl}
\Delta y^T \nabla f(y) 
&=&
 -\left(U\left(D+\lambda I\right)^{-1} U^TF\right)^T (UDU^TF)
\\&=&
 -(U^TF)^T\left(D+\lambda I\right)^{-1} D(U^TF)
\\&=&
 -(U^TF)^TD^{1/2}\left(D+\lambda I\right)^{-1} D^{1/2}(U^TF)
\\&<&
 0 \quad \iff   (D^{1/2}U^T)F \neq 0.
\end{array}
\]
}
By \cref{eq:FnJF}, the latter is not zero if, and only 
if,~\cref{eq:notzero} holds. This completes the proof.

\end{proof}

\subsubsection{{\crb Well Conditioned} Generalized Jacobian}
Recall the optimality conditions derived following 
\cref{eq:KKTprojproblemhyp}.
If we denote the orthogonal projection operator onto the nonnegative orthant by
\textdef{$\cPN w  = w_+$}, then

\[
Aw_+ = A (\cPN w)  = (A\cPN)w_+ = (A\cPN)(\cPN w) = \sum_{w_i> 0} w_iA_i.
\]
%\[
%Aw_+ = A (\cPN w)  = (A\cPN)w_+ = 
%\sum_{w_i\geq 0} A_iw_i.
%\]
Here $A_i$ is the $i$-th column of $A$.
Thus, we see that at points where the projection is differentiable,
the columns of $A$ that are chosen correspond to the
positive variables of $w$. We note that 
 \label{page:Ayeq0}

\[
{\crb v+A^Ty> 0} \implies  F^\prime(\Delta y) = AIA^T\Delta y= AA^T\Delta y.
\]
Define the three index sets, {\crb $\cI_{+},\, \cI_{0},\, \cI_{-}$}, respectively, by
 \label{page:signfunfix}

{\crb 
\[
\textdef{$\cI_{+,0,-} := \cI_{+,0,-}(y) = \{i : (v+A^Ty)_i >0,=0,<0\}$}.
\]
Then, for sufficiently small $\Delta y$ we can ignore $\cI_-$ to get
\[
\begin{array}{rcl}
F(y+\Delta y) - F(y) 
&=&
A(v+A^T(y+\Delta y))_+ - A(v+A^Ty)_+
\\&=&
\sum_{i\in \cI_+(y+\Delta y)} (v+A^T(y+\Delta y))_iA_i 
 -\sum_{i\in \cI_+(y)} (v+A^Ty)_iA_i  
\\&=&
\sum_{i\in \cI_+(y)} (A^T\Delta y)_i A_i +
\sum_{i\in \cI_+(y+\Delta y)\cap \cI_0(y)} (v+A^T(y+\Delta y))_i  A_i
\\&=&
\sum_{i\in \cI_+(y)} A_iA_i^T\Delta y +
\sum_{i\in \cI_+(y+\Delta y)\cap \cI_0(y)} (v+A^T(y+\Delta y))_i  A_i
\\&=&
\sum_{i\in \cI_+(y)} A_iA_i^T\Delta y +
\sum_{i\in \cI_+(y+\Delta y)\cap \cI_0(y)} (A^T\Delta y)_i  A_i
\\&=&
\sum_{i\in \cI_+(y)} A_iA_i^T\Delta y +
\sum_{i\in \cI_+(y+\Delta y)\cap \cI_0(y)} A_iA_i^T\Delta y.
\end{array}
\]
We note that the first summation is over the fixed index set $\cI_+(y)$,
while the second is dependent on $(A^T \Delta y)_i>0$.
Suppose that $A_{\cI_0}^T \Delta y = e_i$ is consistent for each $i\in\cI_0$. Then we can add or not add the corresponding column to the
generalized Jacobian. This means we only need a maximum linearly
independent subset of the columns $A_{\cI_0}$. Let
\textdef{$\bar \cI_0 \subseteq \cI_0$} be a  maximum linearly independent
subset\footnote{We use the variant of the QR decomposition
\href{https://www.mathworks.com/matlabcentral/fileexchange/77437-extract-linearly-independent-subset-of-matrix-columns}{\emph{licols}}
to extract a \emph{nice} subset of linearly independent columns.}.
}

Following~\cite{HuImLiWo:21} {\crb with the change using \emph{licols}}
and $\bar \cI_0$, we define the following set 

\begin{equation}
\label{Uy}
\textdef{$\cU(y)$}:= \left\{u \in \Rn \,:\, 
u_i \in 
\begin{array}{cl}
&\\
\left\{\begin{array}{cl}
\{1\}, & \text{ if } i \in \cI_+ \\
 \left[0,1\right],  & \text{ if } i \in \bar \cI_0 \\
\{0\}, & \text{ if } i \in \cI_-\cup (\cI_0\backslash \bar \cI_0)
\end{array} \right. \\
&
\end{array} 
\right \}.
\end{equation}
Then the generalized Jacobian of the nonlinear system at $y\in \Rm$ is given 
by the set 

\begin{equation}
\label{eq:genjacU}
    \partial F(y) = \{A \, \Diag(u) \, A^T\,:\, u \in \cU (y) \}.
\end{equation}
Let $y_0\in \Rm$.
Here $\Diag$ is the diagonal matrix formed from $u$.
The nonsmooth Newton method for solving $F(y) = 0$ consists of 
the following iterative process.
\index{$\Diag(v)$}

\begin{equation} \label{eq:iter}
	y^{k+1} = y^k - V_k^{-1} F(y^k), \, V_k \in \partial F(y^k).
\end{equation}
{\crb Here $V_k$ is a generalized Jacobian (matrix) taken from the generalized
Jacobian $\partial F(y^k)$.}

 \label{page:signfun}
%%We use the modified sign function
%%
%%\[
%%\textdef{$\sign_+ (w)$} :=\left\{\begin{array}{cl}
%%1, & \text{if } w\geq 0,\\
%%0, & \text{if } w< 0.
%%\end{array}
%%\right.
%%\]
%\textdef{$\cI_+ := \cI_+(y) = \{i : \sign_+(v+A^Ty)_i = 1\}$},\quad
%\textdef{$\cI_0 := \cI_0(y) = \{i : \sign_+(v+A^Ty)_i = 0\}$}.
%\]
We note that, defining $M=\Diag(u)$ with $\crb{ u\in \cU(y)}$, we have
 \label{page:defu}

{\crb
\begin{equation}
\label{eq:maxrankM}
    AMA^T 
%= A_{\cI_+\cup \cI_0}M_{\cI_+\cup \cI_0}A_{\cI_+\cup \cI_0}^T 
= \sum_{i\in\cI_+\cup \bar \cI_0}u_i A_iA_i^T,\quad
u_i=1, i\in \cI_+,\,
u_i\in [0,1], i\in \bar \cI_0.\footnote{\crb Note that for positive diagonal $M$,
and rectangular $B$, the ranks of $B, BM, (BM)(BM)^T$ are all the same.}
\end{equation}
Note that for an index set $\cT$, $A_\cT$ denotes the submatrix 
of $A$ formed using the columns indexed by $\cT$.
\index{$A_\cT$, columns of $A$} 
}

{\crb
\begin{remark}
\label{rem:genJac}
Since we have freedom in choosing the 
values $u_i\in [0,1], i\in \bar \cI_0$, we follow the optimal diagonal
scaling in \cite[Prop. 2.1(v)]{DeWo:90}, \cite[Thm. 5.2]{HaLeWaDaHeCondNumb:23}
to minimize a condition number, and choose the generalized Jacobian by
setting
\[
u_i = \min\{1, 1/\|A_i\|^2\}, \, \forall i \in \bar \cI_0.
\]
This means that the generalized Jacobian matrix we choose is nonsingular if, 
and only if, $A_{\cI_+\cup\cI_0}$ is full rank $m$. Moreover, for large
problems we expect $\|A_i\|>1$ and therefore $u_i<1$. This goes against
the intuitive choice of making $u_i$ as large as possible, i.e.,~$=1$.
Note that all elements of $\partial F(y)$ are invertible if, and only
if, $A_{\cI_+}$ is invertible; while there
exists an invertible element if, and only
if, $A_{\cI_+\cup \cI_0}$ is full rank $m$.
%%It appears that for random $A$,
%%the maximum rank for $AMA^T$, the
%%\textdef{maximum rank generalized Jacobian},
%% is obtained by choosing the scalars $\alpha_i = 1, \forall i$.
%%However, a common technique for treating free variables in \LP is to
%%replace them by the difference of two nonnegative variables
%%$x_i\leftarrow x_i^\prime-x_i^{\prime\prime}$. This results in an
%%unbounded optimal set and srict feasibility failing for the dual,
%%see~\cite{ImWolk:22}. In addition, this means that we get cancellation
%%when adding columns and a decrease of rank. Therefore, 
%%for each $i$, in our approach for \LP we choose the
%%\textdef{maximum rank generalized Jacobian} by 
%%taking a random scalar $\alpha_i\in [.25,.75]$.
\end{remark}
}
 \label{page:Acols}

\subsubsection{Vertices and Polar Cones}
In our numerical tests we can decide on the characteristics of the optimal
solution using the properties of (degenerate) vertices.

\begin{lemma}[\textdef{vertex} and \textdef{polar cone}]
\label{lem:xoptvertex}
Suppose that \textdef{$x(y) = (v+A^Ty)_+ \in P$},  where $y\in \Rm$. 
Then the following are equivalent:
\begin{enumerate}[(i)]
\item
\label{i:isvertex}
$x(y)$ is a vertex of $P$;
\item
 \label{page:Acolsnonsing}
$A_{\cI_+(y)}$ is full column rank;
\item
\label{i:ismatrix}
$\begin{bmatrix}
A_{\cI_+}&A_{\cI_0\cup \cI_-}\cr
 0     &     I_{\cI_0\cup \cI_-}
\end{bmatrix}$
is full column rank $n$.

\end{enumerate}
Moreover: 
\begin{enumerate}[(a)]
\item
\label{item:nonsingJac}
the corresponding generalized Jacobian in \cref{eq:maxrankM},
\Cref{rem:genJac}, is nonsingular if $x(y)$ is a nondegenerate vertex;
\item
\label{item:nonnegpolar}
the {\crb (nonnegative)} polar cone of the feasible 
set $P$ at $x = x(y)$ is
\index{feasible set, $P$}
\index{$P$, feasible set}

\begin{equation}
\label{eq:polarconevrtx}
(P-x)^+ = \{w : w = A^Tu + z,\, u \in  \Rm,\, z\in \Rnp,\, x^Tz=0\}.
\end{equation}
\index{polar cone of $P$ at $x$, $(P-x)^+$}
\index{$(P-x)^+$, polar cone of $P$ at $x$}
\end{enumerate}
\end{lemma}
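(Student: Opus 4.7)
The plan is to establish the three-way equivalence (i)--(iii) first via classical polyhedral combinatorics, and then handle (a) and (b) separately.

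For (i) $\iff$ (ii), I would invoke the textbook characterization of basic feasible solutions: a point in $P=\{x\in\Rnp : Ax=b\}$ is a vertex if and only if the columns of $A$ indexed by its support are linearly independent. Since $x(y)=(v+A^Ty)_+$, its support is exactly $\cI_+(y)$ by the very definition of $\cI_+,\cI_0,\cI_-$, so the equivalence is immediate. For (ii) $\iff$ (iii), the key observation is that the matrix in (iii) is block upper triangular with the bottom-right block equal to an identity of size $|\cI_0\cup\cI_-|$; hence its column rank splits as $\rank A_{\cI_+} + |\cI_0\cup\cI_-|$, so full column rank $n=|\cI_+|+|\cI_0\cup\cI_-|$ is equivalent to $\rank A_{\cI_+}=|\cI_+|$, i.e., full column rank of $A_{\cI_+}$.

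For (a), nondegeneracy of the vertex forces $|\cI_+|=m$ (one positive component per equality constraint, using $\rank A=m$). Then by (ii), $A_{\cI_+}$ is $m\times m$ and invertible. Following \Cref{rem:genJac}, the chosen generalized Jacobian assigns $u_i=1$ on $\cI_+$ and strictly positive weights on $\bar\cI_0$, so
\[
AMA^T \;=\; A_{\cI_+}A_{\cI_+}^T \;+\; \sum_{i\in\bar\cI_0} u_i\,A_iA_i^T,
\]
where the first summand is already positive definite and the rest are positive semidefinite, giving nonsingularity.

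For (b), I plan to prove both inclusions directly. The containment $\supseteq$ is a short computation: if $w=A^Tu+z$ with $z\in\Rnp$ and $x^Tz=0$, then for any $p\in P$, using $Ap=Ax=b$ and $p,z\geq 0$,
\[
\langle w, p-x\rangle \;=\; u^TA(p-x) + z^Tp - z^Tx \;=\; 0 + z^Tp - 0 \;\geq\; 0.
\]
For the harder direction $\subseteq$, I plan to invoke the standard Farkas-type representation of the polar cone at a feasible point of a standard-form polyhedron: $(P-x)^+$ is generated by $\pm A^Te_i$ (from equality rows) together with the unit vectors $e_j$ for indices $j$ with $x_j=0$. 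Packaging the latter as a nonnegative vector $z$ whose support lies in $\{j : x_j=0\}$ is equivalent, given $x\geq 0$, to the complementarity condition $x^Tz=0$. The main obstacle is precisely this $\subseteq$ step, which requires careful bookkeeping around degenerate active constraints and the correct identification of generators; I expect to reduce it cleanly to polyhedral cone duality (or directly to Farkas's lemma applied to the system characterizing directions into $P$ from $x$), after which items (i)--(iii) reduce to linear algebra and (a) follows by inspection.
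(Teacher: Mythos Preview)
Your proposal is correct and, for the equivalences (i)--(iii) and for (a), follows essentially the same route as the paper: the active-constraint matrix in (iii) is block upper triangular with identity bottom-right block, so full column rank $n$ reduces to full column rank of $A_{\cI_+}$, which is the textbook support criterion for a vertex; and nondegeneracy forces $|\cI_+|=m$, making $A_{\cI_+}A_{\cI_+}^T$ positive definite.

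The one genuine difference is in part (b). The paper does not prove the two inclusions directly; instead it invokes the optimality characterization of the polar cone: $x-v\in (P-x)^+$ if and only if $x$ is optimal for the best approximation problem with anchor $v$, and the KKT conditions already derived in \Cref{thm:projoptndual} give exactly the form $x-v = A^Ty + z$ with $z\in\Rnp$, $x^Tz=0$. Varying $v$ over all points with projection $x$ then yields the description of $(P-x)^+$. Your plan---an elementary computation for $\supseteq$ and Farkas/polyhedral duality for $\subseteq$---is more self-contained and does not rely on the optimality theory developed earlier in the paper, at the cost of redoing a small Farkas argument. Both approaches are standard and short; the paper's version has the advantage of reusing machinery already in hand, while yours makes the lemma independent of the preceding KKT analysis.
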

\begin{proof}
Without loss of generality we can permute the columns of $A$
and corresponding components of $x$
and have $A=\begin{bmatrix} A_{\cI_+}&A_{\cI_0}&A_{\cI_-}\end{bmatrix}$. 
We know that $x(y)$ is a vertex (equivalently an extreme point, a 
basic feasible solution) if, and only if $A_{\cI_+}$ can be completed to
a basis matrix if, and only if, the active set is full rank $n$.
The active set of constraints is

\begin{equation}
\label{eq:actset}
\begin{bmatrix}
A_{\cI_+}&A_{\cI_0\cup \cI_-}\cr
 0     &     I_{\cI_0\cup \cI_-}
\end{bmatrix} x = 
\begin{pmatrix} b \cr 0
\end{pmatrix}.
\end{equation}
This has the unique solution $x(y)$ if, and only if, $A_{\cI_+}$ is
full column rank. {\crb This shows the three equivalences 
\cref{i:isvertex,page:Acolsnonsing,i:ismatrix}, as well as
the nonsingularity of the generalized Jacobian that we choose
as claimed in~\cref{item:nonsingJac}.}

From the optimality conditions we have that the gradient of the
objective satisfies 

\[
x-v = A^Ty+ \sum_{j\in \cI_0\cup \cI_-} z_j e_j,
\]
where $e_j$ is the $j$-th unit vector.
And we know that $x-v$ is in the polar cone at $x$ if, and only if, $x$
is optimal. Therefore, this yields the description of the polar cone at $x$
as claimed in~\cref{item:nonnegpolar}.
\end{proof}

\begin{remark}[degeneracy of optimal solutions]
\label{rem:degoptsols}
Let $x$ be a boundary point of $P$.
Then the polar cone of $P$ at $x$ is given in~\cref{eq:polarconevrtx}.
Moreover,  $x$ is the optimal solution of~\cref{eq:projproblemhyp}
if, and only if, $x-v \in (P-x)^+$, i.e.,~we can choose $v$ with

\[
v = x - A^Tu {\crb -z}, \, z\geq 0, \, z^Tx=0. 
\]
In fact, we can choose $z$ so that $x+z>0$ and have no degeneracy or
choose $z=0$ and have high degeneracy. For these choices we still get
$x$ optimal.
As mentioned above, it is shown in~\cite{facchinei2003finite} that

\[
x^*(v) \text{  is differentiable at $\bar v$  } \iff \,\,
(x^*(\bar v)-\bar v)  \in \relint (P-x^*(\bar v))^+,
\]
where $\relint$ refers to the relative interior.
This justifies our use of the Levenberg-Marquardt regularization.
\end{remark}

The pseudocodes for solving
\Cref{eq:projproblemhyp} using the exact and inexact nonsmooth Newton
methods are presented below  in~\Cref{app:pseudos} in 
\Cref{alg:ExactNonsmoothNewton,alg:InexactNonsmoothNewton},
respectively.

\section{Cyclic \HLWB Projection for Best Approximation}
\label{sec:BAP}

\index{\HLWB, Halpern-Lions-Wittmann-Bauschke}
\index{Halpern-Lions-Wittmann-Bauschke, \HLWB}
A notable aspect of this work is the computational comparison
of our semismooth algorithm with the method 
of Halpern-Lions-Wittmann-Bauschke, (\HLWBp). The convergence analysis of
the method has its roots in the field of fixed point theory. For
the readers' convenience we provide a brief description and some
relevant references.

\begin{problem}
[{{\crb The} \textdef{best approximation problem for linear inequalities}}]
\label{prob:BAP}
Given an $m\times n$ matrix $A$ and a vector $b\in R^{m}$ such that

\begin{equation}
Q:=\{x\in R^{n} : Ax\leq b\}\neq\varnothing,\label{eq:Q}
\end{equation}
and a point $v\in R^{n},$ $v\notin Q,$ called the \textdef{anchor point}, find
the orthogonal projection of $v$ onto $Q,$ denoted by $P_{Q}(v).$
\end{problem}

%%Given $m$ linear inequalities in $R^{n}$,
%%\begin{equation}
%%x^Ta^{i}\leq b_{i},\:i=1,2,\ldots,m,\label{eq:lin-ineqs}
%%\end{equation}
%% where $a^{i}\in R^{n}$ are the rows of an $m\times n$ matrix $A=(a_{j}^{i})_{i,j=1}^{m,n}$
%%and $b=(b_{i})_{i=1}^{m}\in R^{m}$ so that (\ref{eq:lin-ineqs})
%%is $Ax\leq b.$ 

The set $Q$ is the intersection of $m$ half-spaces.
Denote the $i$-th half-space of \cref{eq:Q} by

\begin{equation}
\label{eq:Hihalfspace}
H_{i}:=\{x\in R^{n} \,:\, x^Ta^i\leq b_{i}\},
\end{equation}
where $a^i$ is the $i$-th row of $A$ and $b_i$ is the $i$-th component
of $b$.
The orthogonal projection of a point
$v\in R^{n}$ onto $H_{i}$, denoted by $P_{i}(v)$, is
\index{$a^i$, $i$-th row of $A$}

\begin{equation}
P_{i}(v)=v+\min\left\{ 0,\frac{b_{i}-x^Ta^{i}}{\left\Vert a^{i}\right\Vert ^{2}}\right\} a^{i}.\label{eq:proj}
\end{equation}
The \HLWB algorithm for this problem is a \textit{projection method}
that employs projections onto the individual half-spaces of 
\cref{eq:Hihalfspace}
and makes use of a sequence of, so called, steering parameters.
\begin{definition} [\textdef{steering sequence}]
\label{def:Steering-params}
A real sequence
$(\sigma_{k})_{k=0}^{\infty}$ is called a \textit{steering sequence}
if it has the following properties:

\begin{equation}
\label{eq:SteeringSeq}
\begin{array}{ll}
\sigma_{k}\in[0,1]\enspace\text{for}\enspace\text{all}\enspace\,k\geq0,\enspace\text{and}\enspace\underset{k\rightarrow\infty}{\mathrm{lim}}\sigma_{k}=0,&
\\
\vspace{.06in}
\sum_{k=0}^{\infty}\sigma_{k}=\infty,  
&  \left(\text{\text{or equivalently, }} 
           \prod_{k=0}^{\infty}(1-\sigma_{k})=0\right),
\\
\sum_{k=0}^{\infty}|\sigma_{k+1}-\sigma_{k}|<\infty.&
\end{array}
\end{equation}
\end{definition}
Observe that although $\sigma_{k}\in[0,1]$, the definition rules out
the option of choosing all $\sigma_{k}$ equal to zero or all equal
to one because of contradictions with the other properties. 
The third property in \cref{eq:SteeringSeq}
was introduced by Wittmann, see, e.g., the review paper of López,
Martin-Márquez and Xu \cite{lopez-2010}.\smallskip{}

\begin{algorithm}
\caption{cyclic \HLWB algorithm for linear inequalities}
\label{alg:HLWB}
\textbf{Initialization:} Choose an arbitrary initialization
point {\crb $x_{0}$} $\in R^{n}$\; 
\\\textbf{Iterative Step:}~Given the current
iterate {\crb $x_{k}$}, calculate the next iterate {\crb $x_{k+1}$} by 

\begin{equation}
{\crb x_{k+1}}=\sigma_{k}v+(1-\sigma_{k})P_{i_k}({\crb x_{k}}),  \label{eq:basic}
\end{equation}
where $v$ is the given anchor point, $i_k=k$ mod $m$$+1$ and
$(\sigma_{k})_{k=0}^{\infty}$ is a steering sequence.
\end{algorithm}

\smallskip{}

\index{best approximation problem, \BAP}
\index{BAP, best approximation problem}
The \HLWB algorithm has a much broader formulation that applies to
the  \BAP with respect to the common fixed
points set of a family of firmly nonexpansive (FNE) operators presented
in Bauschke \cite{1996_Ba};  see also Bauschke and Combettes
\cite[Chap. 30]{MR3616647}. For more on the \BAP,
see, e.g., Deutsch's book \cite{deutsch-book}. The family of iterative
projection methods for the \BAP includes, in addition to the \HLWB method,
also Dykstra's algorithm \cite{BD86}, \cite[Theorem 30.7]{MR3616647},
Haugazeau's algorithm \cite{Haugazeau}, \cite[Corollary 30.15]{MR3616647},
and Hildreth's algorithm \cite{Hildreth,LC-hildreth1980}. There are
also simultaneous versions of some of these algorithms available,
see, e.g., \cite{censor-comp-LAA-2006}. A string-averaging \HLWB algorithm,
which encompasses the sequential, the simultaneous and other variants
of the \HLWB algorithm, recently appeared in \cite{nisenbaum}.

More on applications of \BAP and the \HLWB algorithm are given
in~\Cref{subsec:Applications}.

\section{Applications}
We consider several applications of the best approximation
problem,~\cref{eq:projproblemhyp}.
Of special interest is the following approach
to solving a linear program, (\LPp).
\index{linear program, \LPp}
\index{\LPp, linear program}

\subsection{Solving Linear Programs}
\label{sect:LPs}
We consider a maximization primal LP in standard equality form 

\begin{equation}
\label{eq:PLP}
    \text{(PLP)} \qquad  
	  \begin{array}{rcl}
	    p_{LP}^* := & \max  & c^Tx \\
	    &  \text{s.t. } &Ax = b\in \Rm  \\
	    &         &x \in \Rnp.
    \end{array}
\end{equation}
The dual LP is

\begin{equation}
\label{eq:DLP}
    \text{(DLP)} \qquad  
	  \begin{array}{rcl}
	    d_{LP}^* := & \min  & b^Ty \\
	    &  \text{s.t. } &A^Ty -z = c\in \Rn \\
	    &         &z \in \Rnp.
    \end{array}
\end{equation}
We assume that $A$ is full row rank and that
the optimal value is finite. Note that the fundamental
theorem of linear programming now guarantees that strong duality holds for
both the primal and dual problems, i.e.,~equality $p_{LP}^*=d_{LP}^*$
holds and both optimal values are \emph{attained}. 

We now see in \Cref{lem:LPxR} that the solution to (PLP) is
the limit of the sequence of projections of the
vectors $v_R=Rc\in \Rn$ onto the feasible set as\footnote{Note that our
algorithm identifies infeasibility, but we do not consider that
aspect in this paper.}
$R\uparrow \infty$.
\begin{lemma}[\cite{MR83c:90098,MR774243,MR2062967,smw2}]
\label{lem:LPxR}
Let the given LP data be
$A,b,c$ with finite optimal value $p_{LP}^*$. For each $R>0$ define

\begin{equation}
\label{eq:LPxR}
	  \begin{array}{rcl}
	    x^*(R) := & \argmin_{x}  &\frac12 \norm{x-Rc}^2 \\ 
	    &  \text{s.t. } &Ax = b\in \Rm  \\
	    &         &x \in \Rnp.
    \end{array}
\end{equation}
Then $x^*$ is the \textdef{minimum norm solution} of (PLP) if, and only if,
there exists $\bar R > 0$ such that

\begin{equation}
R \geq \bar R \implies
x^* = x^*(R)=  \argmin \left\{ \frac12 \norm{x-Rc}^2 \,:\,
	    Ax = b, \, x \in \Rnp \right\}.
\end{equation}
\end{lemma}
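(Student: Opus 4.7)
My plan is to establish both directions by matching KKT systems, with the converse following immediately from the forward direction via uniqueness of the projection.

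For the forward direction, let $\hat x$ denote the (unique) minimum norm optimizer of (PLP). Since (PLP) has a finite optimum, strong LP duality produces multipliers $(y^*,z^*)$ with $A^Ty^* - z^* = c$, $z^*\in \Rnp$, and $(z^*)^T\hat x = 0$. Because $\hat x$ minimizes $\frac12\|x\|^2$ over the optimal face $\{x\in P : c^Tx = p_{LP}^*\}$, its KKT conditions produce additional multipliers $\mu\in\Rm$, $\nu\geq 0$, $\zeta\in\Rnp$ satisfying $\zeta^T\hat x=0$ and $\hat x = A^T\mu + \nu c + \zeta$. Set $\bar R := \nu$. For any $R\geq \bar R$, I propose the candidate projection multipliers
\[
y_R := \mu + (\nu - R)y^*, \qquad z_R := \zeta + (R - \nu)z^*.
\]
Expanding $Rc + A^T y_R + z_R$ and using $A^Ty^* = c + z^*$ collapses to $\hat x$; nonnegativity $z_R\in \Rnp$ follows from $\zeta\geq 0$, $z^*\geq 0$, and $R\geq \nu$; complementarity $z_R^T\hat x = 0$ follows from $\zeta^T\hat x = 0$ and $(z^*)^T\hat x = 0$. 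By the KKT characterization of the projection subproblem in \Cref{thm:projoptndual}, these data certify $\hat x = x^*(R)$ for every $R\geq \bar R$.

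For the converse, suppose $x^* = x^*(R)$ for all $R\geq \bar R$. Let $\hat x$ be the unique minimum norm optimizer of (PLP). The forward direction, just established, supplies some $\bar R' > 0$ such that $x^*(R) = \hat x$ for every $R\geq \bar R'$. Taking any $R\geq \max(\bar R, \bar R')$ and invoking single-valuedness of the projection onto the polyhedron $P$ yields $x^* = \hat x$, so $x^*$ is itself the minimum norm solution of (PLP).

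The main obstacle is discovering the explicit multiplier update $(y_R,z_R)$; once the ansatz is written down, the verification is a direct calculation, but finding it requires the observation that both $c$ and the LP dual slack $z^*$ must be absorbed linearly into the gradient of the projection Lagrangian, with the critical threshold $\bar R = \nu$ then forced by the sign requirement $z_R\geq 0$. A secondary subtlety is that $\nu$ is the KKT multiplier attached to the active constraint $c^Tx\geq p_{LP}^*$ at $\hat x$, so its existence and nonnegativity must be justified via a standard constraint qualification (e.g., Slater-type or the polyhedral CQ, which holds automatically here).
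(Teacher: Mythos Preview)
Your argument is correct. The paper does not supply its own proof of \Cref{lem:LPxR}; it cites the result from the literature (Mangasarian and Smith--Wolkowicz). Your KKT-matching approach is precisely the standard one underlying those references: pair the LP dual certificate $(y^*,z^*)$ with the KKT multipliers $(\mu,\nu,\zeta)$ for the norm-minimization over the optimal face, then observe that the affine combination $y_R=\mu+(\nu-R)y^*$, $z_R=\zeta+(R-\nu)z^*$ furnishes valid projection multipliers once $R\ge\nu$. The converse via uniqueness of the projection is clean.

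Two minor points worth tightening. First, your threshold $\bar R=\nu$ may be zero (this happens exactly when $\hat x$ already equals the projection of the origin onto $P$, so the constraint $c^Tx\ge p_{LP}^*$ is inactive at the norm minimizer); in that case simply take $\bar R$ to be any positive number, since the implication then holds for all $R>0$. Second, you correctly flag the CQ issue at the end: since all constraints defining the optimal face are affine, the polyhedral (Abadie) CQ holds automatically and no Slater-type interiority is needed, so existence of $(\mu,\nu,\zeta)$ with $\nu\ge0$ is guaranteed. With those two remarks made explicit, the proof is complete.
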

{\crb
\begin{remark}
Note that the objective function in~\cref{eq:LPxR} when expanded is equivalent to
$R(-c^T x + \frac 1{2R} \norm{x}^2) + (\frac 12\|Rc\|^2)$, i.e.,~this is
equivalent to minimizing $-c^T x + \frac 1{2R} \norm{x}^2$,
an \emph{exact} regularization of the original \LP~\cref{eq:PLP},
e.g.,~\cite{SaundersTomlin:96,FrieTsen:2007}. In fact, using a Lagrange
multiplier argument, we observe that this is equivalent to adding a 
trust region constraint $\|x\|^2 \leq \delta$ to the \LPp. 
The trust region radius
$\delta$ is inversely proportional to the regularization parameter
$\frac 1{2R}$ and so directly proportional to $R$, for $R\leq \bar R$,
where $\bar R$ is given in \Cref{lem:LPxR}.
We note that if $\delta$ is too small, we would have an infeasible
problem. Equivalently, if $R$ is too small, then the BAP solution
$x^*(R)$ is not near the optimal solution $x^*$ of the \LPp.

In our application, we ignore the regularization property but exploit
the fact that we can solve the \BAP efficiently for each $R$.
\end{remark}
}

We would like an $R$ that is not too large but
large enough so that $Rc > \|x^*\|$. We use the following estimate to
start our algorithm:

\begin{equation}
\label{eq:Restimate}
R = \min\left\{50,\frac {\sqrt{mn}\norm b }{1+\norm c }\right\}.
\end{equation}
To avoid numerical complications from large numbers, we consider the
following equivalent problem that uses the scaling $\frac 1Rb$ rather than $Rc$.
\begin{corollary}
\label{cor:wRargmin}
Let $A,b,c,R,x^*(R)$ be defined as in~\Cref{lem:LPxR}. Then

\begin{equation}
\label{eq:LPwR}
	  \begin{array}{rcl}
	   \frac 1Rx^*(R) = w^*(R) := & \argmin_{w}  &\frac12 \norm{w-c}^2 \\ 
	    &  \text{s.t. } &Aw = \frac 1Rb\in \Rm  \\
	    &         &w \in \Rnp.
    \end{array}
\end{equation}
\end{corollary}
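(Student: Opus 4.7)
The plan is to prove this by a direct change of variables $x = Rw$ (equivalently $w = x/R$), exploiting the fact that $R>0$ in the estimate~\cref{eq:Restimate}. This kind of argument is standard, so the task is mainly to carefully track how each component of the optimization problem~\cref{eq:LPxR} transforms.

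First I would substitute $x = Rw$ into~\cref{eq:LPxR}. The equality constraint $Ax = b$ becomes $A(Rw) = b$, i.e., $Aw = \tfrac{1}{R} b$, matching the constraint in~\cref{eq:LPwR}. Since $R>0$, the nonnegativity constraint $x \in \Rnp$ is equivalent to $w \in \Rnp$. Thus the feasible set of~\cref{eq:LPxR} in the variable $x$ is exactly $R$ times the feasible set of~\cref{eq:LPwR} in the variable $w$, and the substitution establishes a bijection between the two feasible sets.

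Next I would handle the objective. We have
\[
\tfrac{1}{2}\|x - Rc\|^2 = \tfrac{1}{2}\|Rw - Rc\|^2 = \tfrac{R^2}{2}\|w - c\|^2,
\]
so minimizing $\tfrac{1}{2}\|x - Rc\|^2$ over the feasible $x$ is equivalent (again using $R>0$) to minimizing $\tfrac{1}{2}\|w - c\|^2$ over the feasible $w$. Therefore the unique minimizers (which exist and are unique by~\Cref{thm:projoptndual}~\ref{thm:projoptndual:i} applied to each polyhedron) are related by $x^*(R) = R\,w^*(R)$, i.e., $\tfrac{1}{R}x^*(R) = w^*(R)$.

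There is no real obstacle here; the only subtle point is making sure $R > 0$ is being used in both the constraint and the objective transformations, and that the bijection between feasible sets is used to transfer the argmin. Uniqueness of the projection onto the (nonempty) polyhedron guarantees the argmin is a single point on both sides, completing the proof.
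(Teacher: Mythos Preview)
Your proposal is correct and follows essentially the same approach as the paper: both arguments perform the change of variables $x=Rw$, transform the constraints $Ax=b$ and $x\in\Rnp$ into $Aw=\tfrac1R b$ and $w\in\Rnp$, rewrite the objective as $\tfrac{R^2}{2}\|w-c\|^2$, and observe that the positive factor $R^2$ does not affect the $\argmin$. Your version is slightly more explicit about the role of $R>0$ and the bijection between feasible sets, but the underlying idea is identical.
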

\begin{proof}
From 

\[
\norm{x-Rc}^2 =R^2\norm{\frac 1Rx-c}^2 =R^2\norm{w-c}^2, \, x=Rw,
\]
we substitute for $x$ in \cref{eq:LPxR} and 
obtain: $A(Rw) = b \iff Aw = \frac 1Rb$.
The result follows from the observation that $\argmin$ does not change
after discarding the constant $R^2$.
\end{proof}

\subsubsection{Warm Start; Stepping Stone External Path Following}
\label{sect:scalRb}
\index{stepping stone external path following}
We consider the scaling in~\Cref{cor:wRargmin} and recall the relation 
between the scaling for $c$ with variable $x$:

\[
x(R) = Rw(R).
\]
(To simplify notation, we ignore the optimality symbol $\left( \cdot
\right)^*$.)
The optimality conditions from~\Cref{thm:gensimplfreevrble}
for $w=w(R)$ in~\Cref{cor:wRargmin} are:

\begin{equation}
\label{eq:optcondtriplewyz}
	\begin{pmatrix} w - c - A^Ty - z \\ Aw - \frac 1Rb \\ z^Tw \end{pmatrix}
= \begin{pmatrix} 0 \\ 0 \\ 0\end{pmatrix}, \quad w,z \in \R^n_+,\, 
y \in \R^m.
\end{equation}
We conclude that

\[
\lim_{R\to \infty} \Proj_{\range(A^T)}w(R) = 0, \, 
\lim_{R\to \infty} Rw(R)= x^*, \,\, \text{the optimum of the LP}.
\]
The optimality conditions are now

\begin{equation}
\label{eq:optscaleb}
w = c+A^Ty + z, \, b = ARw = AR(c+A^Ty)_+, \quad
w^Tz = 0,\, w,z\geq 0.
\end{equation}
This means that $\|w\|$ is an estimate for the error in dual
feasibility, i.e.,~an estimate for the accuracy of $Rw$ as the optimum
of the original LP.

Given the current $R$ and the approximate optimal triplet
$(w(R),y(R),z(R))$, we
would like to find a good new $R_n\geq R$ and a corresponding $y_n$ to send to
the projection algorithm for a warm start process. We use sensitivity
analysis for the best approximation problem.
\begin{theorem}
\label{thm:pathfollowR}
Suppose  {\crb $R>0$ is given} and the triplet $(w,y,z) {\crb = (w(R),y(R),z(R))}$ 
is {\crb primal-dual} optimal for~\cref{eq:LPwR}; 
i.e.,~satisfies~\cref{eq:optcondtriplewyz}. Let 

\begin{equation}
\label{eq:defB}
\begin{array}{c}
	\textdef{$\cN =\cN(z)=  \{i\,:\, z_i > 0 \}$}, \,
\textdef{$\cB=\cB(w) = {\crb \{i \,:\, w_i>0 \}}$}, \,
\textdef{$\cZ=\cZ(w,z) = {\crb \{i \,:\, w_i=z_i=0 \}}$};\\
 %\textdef{$b_\cB = {\crb A_\cB^\dagger b}$}, \quad
 %\textdef{$b_\cN = A_\cN^T \left(A_\cB A_\cB^T\right)^\dagger b$};\\
e = \begin{pmatrix}
   b_\cB-Rw_\cB\cr -(b_\cN+Rz_\cN)
    \end{pmatrix},\quad
f = \begin{pmatrix}
   Rb_\cB\cr -Rb_\cN
    \end{pmatrix},
\end{array}
\end{equation}
where $b_\cB, b_\cN$ are defined in \cref{eq:deltawB} and
\cref{eq:zAbN}, respectively.
Then the maximum value for increasing $R$ {\crb and maintaining
both optimality and the indices in the bases sets $\cB,\cN,\cZ$} is 

\begin{equation}
\label{eq:Rn}
R_n = \min\{{\crb f_i/e_i : e_i > 0, f_i > 0, \, \forall i}\}.
\end{equation}
The corresponding changes $\Delta w,\Delta y,\Delta z$ that result in 
$w+\Delta w,y+\Delta y,z+\Delta z$ {\crb still} optimal for $R_n$
are given in the proof {\crb in \cref{eq:deltawB}, \cref{eq:detlay},
\cref{eq:zAbN}, respectively}.

Moreover, if $R_n=\infty$, then the optimal solution of the \LP has been
found.
\end{theorem}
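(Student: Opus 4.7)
The strategy is parametric sensitivity applied to the optimality conditions \cref{eq:optcondtriplewyz} in the scalar $R$, freezing the sign pattern $(\cB,\cN,\cZ)$ and determining the largest $R_n>R$ for which this pattern remains primal-dual feasible. As soon as any nonnegativity constraint on $w_\cB$ or $z_\cN$ becomes active, the basis structure must change, which identifies the critical value $R_n$.

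First, I would freeze the sign pattern. By the defining relations of $\cB,\cN,\cZ$ together with complementarity, $w_\cN=w_\cZ=0$ and $z_\cB=z_\cZ=0$ throughout. Substituting into \cref{eq:optcondtriplewyz} leaves three equations on the free variables $(w_\cB,y,z_\cN)$: dual feasibility on $\cB$ gives $w_\cB=c_\cB+A_\cB^T y$; dual feasibility on $\cN$ gives $z_\cN=-c_\cN-A_\cN^T y$; and dual feasibility on $\cZ$ imposes the consistency condition $A_\cZ^T y=-c_\cZ$. Primal feasibility reduces to $A_\cB w_\cB=b/R$, so substituting the first relation yields $A_\cB A_\cB^T y=b/R-A_\cB c_\cB$, which is \emph{affine in} $1/R$. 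Hence $w_\cB,y,z_\cN$ are all affine functions of $1/R$ along the path where the sign pattern is preserved.

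Second, I would express the updates $\Delta w_\cB,\Delta y,\Delta z_\cN$ needed when stepping from $R$ to a candidate $R_n$ as the corresponding differences. These reduce to expressions of the form $\Delta w_\cB=\bigl(\tfrac{1}{R_n}-\tfrac{1}{R}\bigr)b_\cB$ (defining \cref{eq:deltawB}) and $\Delta z_\cN=\bigl(\tfrac{1}{R_n}-\tfrac{1}{R}\bigr)b_\cN$ (defining \cref{eq:zAbN}), with $b_\cB,b_\cN$ solving the appropriate sub-linear systems obtained in the previous step; $\Delta y$ is then read off from $A_\cB^T\Delta y=\Delta w_\cB$. Preservation of the sign pattern at $R_n$ then requires $w_\cB+\Delta w_\cB\geq 0$ and $z_\cN+\Delta z_\cN\geq 0$ componentwise, since complementarity and the relations on $\cZ$ are automatic. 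Clearing denominators, each inequality becomes $f_i-e_iR_n\geq 0$ with $e,f$ as in \cref{eq:defB}, so the tightest upper bound is obtained from the minimum ratio $f_i/e_i$ over indices with $e_i>0$ and $f_i>0$. Indices with $e_i\leq 0$ are nonrestrictive, yielding \cref{eq:Rn}. If no restrictive index exists then $R_n=\infty$: the current sign pattern remains optimal for all larger $R$, so via \Cref{lem:LPxR} the scaled iterate $x=Rw$ already satisfies the LP optimality conditions, identifying the minimum-norm optimum.

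The main obstacle is handling the degenerate case $\cZ\neq\varnothing$, where the relation $A_\cZ^T y=-c_\cZ$ over-determines $y$ unless consistency is maintained along the path; a closely related issue is the possible singularity of $A_\cB A_\cB^T$ when $A_\cB$ lacks full row rank, which forces a minimum-norm/pseudoinverse selection. Careful sign bookkeeping is essential in the ratio test, because components of $\Delta w_\cB$ and $\Delta z_\cN$ can either increase or decrease as $R_n$ grows, and only the decreasing coordinates impose upper bounds; this is precisely what the condition $e_i>0$ in \cref{eq:Rn} encodes.
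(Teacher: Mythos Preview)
Your proposal is correct and follows essentially the same approach as the paper: freeze the sign pattern $(\cB,\cN,\cZ)$, express the KKT updates as affine in $1/R$, and derive the LP-type ratio test for the largest $R_n$ preserving nonnegativity of $w_\cB$ and $z_\cN$. The paper makes the $\Delta y$ formula explicit via a nullspace matrix $V_\cZ$ for $A_\cZ^T$ and a pseudoinverse (precisely the obstacle you flag), and reads $\Delta w_\cB=A_\cB^T\Delta y$, $\Delta z_\cN=-A_\cN^T\Delta y$ from that rather than the other way around, but the structure of the argument is the same.
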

\begin{proof}
We {\crb first} want to find the maximum increase in $R$ that keeps the current
basis $\cB$ optimal for~\cref{eq:LPwR}, {\crb i.e.,~we maintain
\[
z_i\geq 0, \forall i\in \cN,\,
w_i\geq 0, \forall i\in \cB,\,
w_i=z_i = 0, \forall i\in \cZ.
\] 
}
To maintain the feasibility from the three basis sets in
\cref{eq:defB}, we have 
 \label{page:dwB}

\begin{equation}
\label{eq:sensanaleqns}
\begin{array}{c}
A_\cB (w_\cB+\Delta w_\cB) = \frac 1{R_n} b \implies 
         A_\cB \Delta w_\cB 
=  \left(\frac 1{R_n} - \frac 1{R}\right) b
\\ w_B +\Delta w_\cB -c_\cB -A_\cB^T(y+\Delta y)  = 0 \implies
                                    \Delta w_\cB  =A_\cB^T(\Delta y)
\implies A_\cB \Delta w_\cB = A_\cB A_\cB^T(\Delta y) = 
                \left(\frac {R-R_n}{RR_n}\right) b
\\  -c_\cZ -A_\cZ^T(y+\Delta y)   = 0 \implies
                     A_\cZ^T(\Delta y) = 0
\\  -c_\cN -A_\cN^T(y+\Delta y) - (z_\cN +\Delta z_\cN)  = 0 \implies
                    \Delta z_\cN =  -A_\cN^T(\Delta y).
\end{array}
\end{equation}
{\crb We have two equations to solve for $\Delta y$.
When strict complementarity fails,
we choose a full column rank matrix $V_\cZ$ that satisfies $\range(V_\cZ) =
\nul(A_\cZ^T)$; otherwise $V_\cZ=I$. Then we solve to get
\begin{equation}
\label{eq:detlay}
\Delta y_p := V_\cZ \left( A_\cB A_\cB^T V_\cZ \right)^\dagger b,\,
\Delta y := \left(\frac {R-R_n}{RR_n}\right) \Delta y_p.\footnote{Note
that in applications we can include indices from $\cZ$ in $\cB$. This
allows for a greater choice for $\Delta y,\Delta w_B$.}
\end{equation}
Note that a solution exists since $b\in \range(A_\cB)$.}\footnote{{\crb In 
the nondegenerate case we get a simplification
since $A_\cB^T \left(A_\cB A_\cB^T\right)^\dagger  =
A_\cB^\dagger$.}}
We now have

\begin{equation}
\label{eq:deltawB}
-w_\cB\leq {\crb \Delta w_\cB} 
= A_\cB^T \left(\frac {R-R_n}{RR_n}\right) \Delta y_p
 %= -\left(\frac {R_n-R}{RR_n}\right)A_\cB^\dagger b 
 = -\left(\frac {R_n-R}{RR_n}\right)A_\cB^T\Delta y_p
 =: -\left(\frac {R_n-R}{RR_n}\right)b_\cB.
\end{equation}
We get that 

\begin{equation}
\label{eq:equivineqs}
(R_n-R)b_\cB \leq (RR_n)w_\cB \,\iff \, R_n(b_\cB-Rw_\cB) \leq
	Rb_\cB.
\end{equation}
To find the
maximum $R_n$ and check that it is not $R_n=\infty$, we use an \LP type
ratio test. We set the two vectors to be
\[
\textdef{$e_\cB=(b_\cB-Rw_\cB)$},\, \textdef{$f_\cB= Rb_\cB$}.
\]
Note that the inequalities in \cref{eq:equivineqs} hold trivially for 
$R_n=R$. 
{\crb For simplicity of notation, we ignore the subscript $\cB$ and use
$e,f$.}
Therefore, we
cannot have {\crb both} $e_i > 0, f_i\leq 0$. We choose $R_n$ to be the 
maximum that satisfies {\crb the ratio test, i.e.,~we get:}

\begin{equation}
\label{eq:ineqtwo}
\crb
R_n = \min_i 
\{ f_i/e_i\,:\,   f_i > 0, e_i >0 , \, i \in \cB\},
\end{equation}
where the minimum over the empty set is by definition $+\infty$. 
{\crb
Note that
$\max_i \{ f_i/e_i\,:\,   f_i < 0, e_i <0 , \, i \in \cB\} \leq R_n$
always holds since $R_n=R>0$ satisfies the
inequality. Moreover, the result simplifies in the nondegenerate case as
we  have
\[
A_\cB^T \left(\frac {R-R_n}{RR_n}\right) \Delta y_p
 = -\left(\frac {R_n-R}{RR_n}\right)A_\cB^\dagger b 
 = -\left(\frac {R_n-R}{RR_n}\right)b_\cB,\quad b_\cB = A^\dagger_\cB b.
\]
We can then set
$R_n=\infty$ if $A_\cB$ is full column rank 
or $b_\cB = w_\cB$, i.e.,~we have the (best) least squares solution.}

Similarly we now need a ratio test for $z_\cN$ to maintain dual
feasibility and nonnegativity. {\crb Note that we set $\Delta z_i=\Delta
w_i = 0, \forall i\in \cZ$.}
We have

\begin{equation}
\label{eq:zAbN}
-z_\cN\leq {\crb \Delta z_\cN} 
= -A_\cN^T \left(\frac {R-R_n}{RR_n}\right) \Delta y_p
 = \left(\frac {R_n-R}{RR_n}\right)A_\cN^T 
 \Delta y_p
 =: \left(\frac {R_n-R}{RR_n}\right)b_\cN.
\end{equation}
We get that 

\[
(R_n-R)b_\cN \geq -(RR_n)z_\cN \,\iff \, R_n(-b_\cN-Rz_\cN) \leq
	-Rb_\cN.
\]
We again find the maximum $R_n$ and check that we do not have 
$R_n=\infty$ using an \LP type
ratio test. We set the two vectors to be
$e_\cN=-(b_\cN+Rz_\cN),\, f_\cN=  -Rb_\cN$.
Recall that the inequality holds trivially for $R_n=R$. 
{\crb Again, for simplicity of notation, we ignore the subscript $\cN$ and use
$e,f$.}
Therefore, we
cannot have $e_i > 0, f_i\leq 0$. We choose $R_n$ to be the maximum that
satisfies:

\[
\max_i \{ f_i/e_i, \text{  if  }  f_i < 0, e_i <0, \, i\in \cN \} \leq
R_n = \min_i 
\{ f_i/e_i, \text{  if  }  f_i > 0, e_i >0, \, i\in \cN \}.
\]

We choose $R_n$ as the minimum of the above two values found.

Finally, if $R_n=\infty$, then the bases do not change as $R$
increases to infinity, i.e.,~the optimal bases have been found.
\end{proof}

The above~\Cref{thm:pathfollowR} illustrates the external path following
algorithm that we are using. The theorem finds specific values of $R$,
\emph{stepping stones on the path},
where the current choice of columns of $A$ changes. Once we find that
the next \textdef{stepping stone} is at infinity, we know that we have
found the optimal choice of columns of $A$. Thus, we have an external
path following algorithm with parameter $R$ but we only choose specific
points on this path to \emph{step} on. 
{\crb The algorithm is particularly efficient for nondegenerate problems,
$\cZ=\emptyset$, where the sensitivity analysis is accurate. 
For highly degenerate problems, restricting $\Delta w_i=\Delta z_i=0,
\forall i \in \cZ$, can severely restrict increasing $R$, 
see~\Cref{sect:LPappTest} below.
}

\subsubsection{Upper and Lower Bounds for the \LP Problem}
The optimal solution from the projection problems~\cref{eq:LPxR,eq:LPwR}
provides a feasible $x$, and we get the corresponding \textdef{\LP lower
bound} $c^Tx^*(R)$. The upper bound is not as easy and more important in
stopping the algorithm.

Note that in~\Cref{sect:scalRb} primal feasibility and complementary
slackness hold for $x(R) = Rw$ and $z$, and this is identical for the LP
problem. Therefore, we need to find $y_\LPt$ to satisfy the LP dual
feasibility

\[
z_\LPt = A^Ty_\LPt - c \geq 0.
\]
But, from the projection problem optimality conditions we have

\[
A^T(-y) = z + c - w, \, 0\leq  z = A^T(-y) -c + w, \, w\geq 0.
\]
 \label{page:precleq}
As seen above, this means that in the limit, $w$ is small and we do get
dual feasibility $y(R)\to y_\LPt$. But at each iteration we actually have

\begin{equation}
\label{eq:otpcondR}
 z-w = A^T(-y) -c, \, z,w\geq 0, z^Tw = 0, \quad y\cong y_R.
\end{equation}
We can write the required dual feasibility equations using the indices for $w_i>0$.

\[
A_i^Ty - c_i \in \left\{\begin{array}{rl}  \{0\}, & \text{if  } w_i > 0, \\
                                            \R_+, & \text{if  } w_i = 0.
\end{array}
\right.
\]
Recall the definitions of $\cN,\cB$ in~\cref{eq:defB}.
Then for a given $y_R$ from the optimality conditions from the projection
problem~\cref{eq:otpcondR},
we consider the nearest dual \LP feasible system with unknowns $z\geq
0,y_\LPt$. Note that we are using the projection with free
variables,~\Cref{sect:freevrbls}.
\begin{lemma}
\label{lem:upperbndLP}
Let $w,y,z$ be approximate optimal solutions from~\cref{eq:optscaleb}
and $\cB$ the support defined in~\cref{eq:defB}. Consider the following
\BAP for the given dual variables.

\begin{equation}
\label{eq:dualfeasB}
\begin{array}{rcl}
\begin{pmatrix}y^*_\LPt\cr z^*_\LPt\end{pmatrix} \in &\argmin & \frac 12
\|(-y) - y_\LPt\|^2 
+\frac 12 \|0-(z_\LPt)_\cB\|^2
+ \frac 12 \|z_\cN - (z_\LPt)_\cN\|^2 \\
& \text{s.t.} & 
\begin{bmatrix} A_\cB^T &-I & 0\cr
A_{\cN}^T   &0&-I
\end{bmatrix}
\begin{pmatrix}
y_\LPt \cr
(z_\LPt)_\cB \cr
(z_\LPt)_\cN
\end{pmatrix}
=
\begin{pmatrix}
c_\cB \cr
c_\cN
\end{pmatrix} \\ 
& & y_\LPt \text{ free}, \,
z_\LPt=
\begin{pmatrix}
(z_\LPt)_\cB\cr (z_\LPt)_\cN
\end{pmatrix}\geq 0.
\end{array}
\end{equation}
Then the optimal value of the \LP \cref{eq:PLP} satisfies the upper bound

\[
p^*_\LPt \leq b^Ty^*_\LPt.
\]
Moreover, suppose that $z_\cB =0$. Then equality holds and the \LP is
solved with primal-dual optimum pair $(w,y_\LPt)$.
\end{lemma}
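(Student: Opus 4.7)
The plan is to deduce both claims from standard LP weak/strong duality, once we recognize that the constraint set of the \BAP~\cref{eq:dualfeasB} is (essentially) the dual LP feasibility set. The first claim then follows from weak duality, and the equality claim follows by verifying the three standard LP optimality conditions for the primal-dual pair $(Rw,y^*_\LPt)$.

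First I would show that $(y^*_\LPt,z^*_\LPt)$ is feasible for the dual LP~\cref{eq:DLP}. The equality constraints in \cref{eq:dualfeasB} give $A_\cB^T y^*_\LPt - (z^*_\LPt)_\cB = c_\cB$ and $A_\cN^T y^*_\LPt - (z^*_\LPt)_\cN = c_\cN$, together with $(z^*_\LPt)_\cB,(z^*_\LPt)_\cN\geq 0$. Assuming the nondegenerate situation $\cZ=\varnothing$ (or, in the degenerate case, after extending the \BAP in the obvious way by appending the block $A_\cZ^T y_\LPt-(z_\LPt)_\cZ=c_\cZ$ with $(z_\LPt)_\cZ\geq 0$), this yields $A^Ty^*_\LPt-z^*_\LPt=c$ with $z^*_\LPt\geq 0$, i.e., $(y^*_\LPt,z^*_\LPt)$ is dual LP feasible. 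On the primal side, $x:=Rw$ satisfies $Ax=ARw=b$ by~\cref{eq:optscaleb}, and $x\geq 0$, so $x$ is primal LP feasible. Weak LP duality, obtained from the one-line computation
\[
c^Tx=(A^Ty^*_\LPt-z^*_\LPt)^Tx=(y^*_\LPt)^TAx-(z^*_\LPt)^Tx=b^Ty^*_\LPt-(z^*_\LPt)^Tx\leq b^Ty^*_\LPt,
\]
then yields $p^*_\LPt\leq b^Ty^*_\LPt$, which is the first claim.

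For the second claim I would interpret the hypothesis $z_\cB=0$ as $(z^*_\LPt)_\cB=0$ (note that $z_\cB=0$ already holds automatically for the \BAP iterate by complementary slackness in~\cref{eq:optscaleb}, so this reading is the only non-vacuous one). Under this hypothesis I would verify LP complementary slackness between $x=Rw$ and $(y^*_\LPt,z^*_\LPt)$: by the definitions of $\cB,\cN,\cZ$ we have $w_i=0$ for all $i\notin\cB$, while $(z^*_\LPt)_i=0$ for all $i\in\cB$ by hypothesis, so $w_i(z^*_\LPt)_i=0$ for every $i$ and hence $x^T z^*_\LPt=R\,w^Tz^*_\LPt=0$. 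Combined with primal and dual LP feasibility established above, the KKT conditions for the LP hold at $(Rw,y^*_\LPt,z^*_\LPt)$, so this triple is primal-dual optimal and strong duality gives $p^*_\LPt=c^T(Rw)=b^Ty^*_\LPt$.

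The only real obstacle is bookkeeping for the degenerate indices $\cZ$: as written, the \BAP \cref{eq:dualfeasB} has no constraint $A_\cZ^T y_\LPt\geq c_\cZ$, so to guarantee dual LP feasibility over all coordinates one needs either $\cZ=\varnothing$ or an implicit extension of the block system to $\cZ$. Everything else is a direct application of the standard LP weak/strong duality argument, and the complementary slackness step is immediate from the support structure of $w$ and the assumed support structure of $z^*_\LPt$.
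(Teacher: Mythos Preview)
Your proposal is correct and follows essentially the same approach as the paper: the paper's proof simply invokes weak LP duality for the bound and then notes that equality follows from the KKT optimality conditions since primal feasibility and complementary slackness hold with $w$. Your write-up is in fact more explicit than the paper's three-sentence proof, and you correctly flag two points the paper glosses over: the need to handle the $\cZ$ indices (the paper implicitly assumes $\cZ=\varnothing$ here) and the reading of the hypothesis $z_\cB=0$ as $(z^*_\LPt)_\cB=0$.
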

\begin{proof}
Recall that the optimal value $p^*_\LPt$ is finite.
The proof of the bound follows from weak duality in linear programming.
Equality follows from the optimality conditions since primal feasibility
and complementary slackness hold with $w$.
\end{proof}

\subsection{Projection and Free Variables}
\label{sect:freevrbls}
For many applications, some of the variables are free and not all the
variables are in the objective function. We consider these two cases.
Note this can arise when the objective is a general least squares
problem, e.g.,~$\min \|Bx-c\|^2$ and we add the constraint $Bx-w=0$ and
substitute the free variable $w$ into the objective function.
\subsubsection{Projection with Free Variables}
We first consider the problem with some of the variables free:

\begin{equation}
\label{eq:projprobfree}
    \text{(P)} \qquad  
	  \begin{array}{rcl}
		  x(v) := & \argmin_{x_1,x_2}  & \frac12 \norm{x-v}^2,\quad
     x= \begin{pmatrix}x_1 \cr x_2 \end{pmatrix},\,
     v= \begin{pmatrix}v_1 \cr v_2 \end{pmatrix},
\\  
	    &  \text{s.t. } &Ax = b\in \Rm  \\
	    &         &x_1 \in \Rnop, \, x_2 \in \Rnt, \,\,
		  \\
         ~~\\
    \text{optimal value: } p_f^*(v) &= & \frac12 \norm{x(v)-v}^2,
    \end{array}
\end{equation}
    \index{optimal value, $p_f^*(v)$}
    \index{$p_f^*(v)$, optimal value}

\begin{theorem}
\label{thm:gensimplfreevrble}
Consider the \textdef{generalized simplex
best approximation problem with free variables} \cref{eq:projprobfree}.
Assume that the feasible set is nonempty.
Then the optimum $x(v)$ exists and is unique. Moreover, let
\index{squared residual function, $f_f(y)$}
\index{$f_f(y)$, squared residual function}

\begin{equation}
\label{eq:Ff(y)free}
F_f(y) := A
\begin{pmatrix} \left((v+A^Ty)_1\right)_+\cr (v+A^Ty)_2\end{pmatrix} -b, \quad \textdef{$f_f(y) = \frac 12\|F_f(y)\|^2$}.
\end{equation}
Then $F_f(y)=0$  $\iff y \in \argmin f_f(y)$, and

\begin{equation}
\label{eq:xofvfree}
x(v) =  \begin{pmatrix} \left((v+A^Ty)_1\right)_+\cr
(v+A^Ty)_2\end{pmatrix}, \,
\text{ for any root }  F_f(y) = 0.
\end{equation}
Let $\textdef{$p_f^*(v)$} = \frac 12 \|x(v)-v\|^2$ denote 
the \textdef{primal optimal value}. Then strong duality holds and
the \textdef{dual problem} of~\cref{eq:projprobfree} is the maximization
of the \textdef{dual functional, $\phi_f(y,z_1)$}:

\[
p_f^*(v)=\textdef{$d_f^*(v)$} := \max_{z_1 \in \R_+^{n1} , y\in \Rm} \phi(y,z_1) :=  
-\frac12 \norm{ \begin{pmatrix}z_1 \cr 0 \end{pmatrix}
- A^Ty }^2 + y^T(Av - b)-z_1^Tv_1.
\]
\index{dual functional, $\phi(y,z)$}
\index{$\phi(y,z)$, dual functional}
\end{theorem}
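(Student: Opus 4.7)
The plan is to mirror the proof of \Cref{thm:projoptndual}, treating the two blocks $x_1,x_2$ separately and applying the Moreau decomposition only on the nonnegative block. Existence and uniqueness of $x(v)$ follow at once from the fact that the feasible set is a nonempty closed convex polyhedron and the objective $\tfrac12\|x-v\|^2$ is strongly convex. Strong duality follows from the same standard Fenchel/Lagrangian duality argument used in \Cref{thm:projoptndual}\ref{thm:projoptndual:i}, once the Lagrangian is written down correctly for the current mixed feasibility set.

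First I would form the Lagrangian with multipliers $y\in\Rm$ for the equality constraint and $z_1\in\R_+^{n_1}$ for the single-sided constraint $x_1\ge 0$ (no multiplier on $x_2$ since $x_2$ is free):
\[
L(x,y,z_1)=\tfrac12\|x-v\|^2+y^T(b-Ax)-z_1^T x_1.
\]
Stationarity in $x$ gives $x_1-v_1-(A^Ty)_1-z_1=0$ and $x_2-v_2-(A^Ty)_2=0$, i.e., the compact dual-feasibility relation $x=v+A^Ty+\begin{pmatrix}z_1\\0\end{pmatrix}$. Substituting back into $L$ as in the proof of \Cref{thm:projoptndual} (expanding $\|z_1+(A^Ty)_1\|^2+\|(A^Ty)_2\|^2$ and collecting) produces exactly the dual functional $\phi_f(y,z_1)$ claimed in the statement, so the max-min reduction establishes $p_f^*(v)=d_f^*(v)$.

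For the characterization~\cref{eq:xofvfree}, I would write the KKT system: primal feasibility $Ax=b$, $x_1\ge 0$; dual feasibility $x-v=A^Ty+\begin{pmatrix}z_1\\0\end{pmatrix}$, $z_1\ge 0$; and complementary slackness $z_1^Tx_1=0$. The second block of dual feasibility gives $x_2=(v+A^Ty)_2$ directly, with no projection needed since $x_2$ is unconstrained. For the first block, dual feasibility rearranges to $(v+A^Ty)_1=x_1-z_1$ with $x_1,z_1\in\R_+^{n_1}$ and $x_1^Tz_1=0$; this is precisely the Moreau decomposition of $(v+A^Ty)_1$ with respect to $\R_+^{n_1}$, so $x_1=\bigl((v+A^Ty)_1\bigr)_+$ and $z_1=-\bigl((v+A^Ty)_1\bigr)_-$. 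Substituting the expression for $x$ into $Ax=b$ gives exactly $F_f(y)=0$. Conversely, given $y$ with $F_f(y)=0$, define $\bar x$ by \cref{eq:xofvfree} and $\bar z_1$ from the Moreau decomposition; the KKT conditions then hold by construction, hence $\bar x=x(v)$.

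Finally, for the equivalence $F_f(y)=0\Leftrightarrow y\in\argmin f_f$, note $f_f\ge 0$. Feasibility of~\cref{eq:projprobfree} combined with the preceding paragraph yields the existence of some $y_0$ with $F_f(y_0)=0$, so $\min f_f=0$; hence $F_f(y)=0$ is equivalent to $y$ attaining the minimum of $f_f$. I do not expect any serious obstacle: the only place that requires care is confirming that the Moreau step goes through only on the nonnegative block and that the free block contributes nothing to the ratio/projection, which is the whole reason $z_2$ never appears in $\phi_f$. The rest is a transcription of the proof of \Cref{thm:projoptndual} with the index partition $(1,2)=(\text{constrained},\text{free})$ carried through.
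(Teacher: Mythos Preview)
Your proposal is correct and follows essentially the same route as the paper's own proof: form the Lagrangian with a multiplier $z_1$ only on the constrained block, solve the stationarity condition, substitute back to obtain the dual functional, then use the KKT conditions together with the Moreau decomposition applied to the first block alone to arrive at~\cref{eq:xofvfree} and the characterization $F_f(y)=0$. Your explicit justification of the equivalence $F_f(y)=0\iff y\in\argmin f_f$ via $\min f_f=0$ is a small addition the paper leaves implicit, but otherwise the arguments coincide.
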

\begin{proof}
We modify the proof of~\Cref{thm:projoptndual}.
The \textdef{Lagrangian, $L_f(x,y,z)$} for~\cref{eq:projprobfree} is
\index{$L_f(x,y,z)$, Lagrangian}

\begin{equation} \label{projprobafffree}
 	 L_f(x,y,z) = \frac12 \norm{x-v}^2 + y^T(b-Ax)-z_1^Tx_1, \quad
 	 \nabla_x L_f(x,y,z) = x-v - A^Ty -
     \begin{pmatrix}z_1 \cr 0 \end{pmatrix}.
\end{equation}
Solving for a stationary point means

\[
0=\nabla_x L_f(x,y,z) \implies  x = v + A^Ty +z,\quad
     z = \begin{pmatrix}z_1 \cr 0 \end{pmatrix}.
\]
Therefore, with this definition of $z$, we still have
at a stationary point that

\[
\begin{array}{rcl}
 L_f(x,y,z) 
&=&
 \frac12 \norm{v + A^Ty +z-v}^2 + y^T(b-A(v + A^Ty +z) )-z^T(v + A^Ty +z)
\\&=&
 \frac12 \norm{ A^Ty +z}^2 + y^Tb -y^TAv -(A^Ty)^T(A^Ty+z) 
-z^Tv - z^T(A^Ty +z)
\\&=&
 \frac12 \norm{ A^Ty +z}^2 + y^Tb -y^TAv -(A^Ty+z)^T(A^Ty+z) -z^Tv
\\&=&
 -\frac12 \norm{ z+ A^Ty }^2 + y^T(b-Av)-z^Tv.
\end{array}
\]

As in~\Cref{thm:projoptndual}, the problem \cref{eq:projprobfree} 
is a projection onto a nonempty polyhedral set,
a closed and convex set. The optimum exists and is unique
and strong duality holds, i.e.,~there is a zero duality gap
$p_f^*=d_f^*$, and the dual value is attained. The Lagrangian dual is 

\[
\begin{array}{rcll}
d^*
&=&
    \max_{z_1 \in \R^{n_1}_+, y} \min_{x} & L_f(x,y,z) = \frac12 \norm{x - v}^2 +
    y^T(b-Ax ) - z_1^Tx_1
\\&=&
    \max_{z_1 \in \R^{n_1}_+, y,x} &  \{L_f(x,y,z_1) : \nabla_x L_f(x,y,z_1) = 0\}
\\&=&
    \max_{z_1 \in \R^{n_1}_+, y,x} &  \{L_f(x,y,z) : x = v + A^Ty +z\}
\\&=&
    \max_{z_1 \in \R^{n_1}_+, y} &  -\frac12 \norm{ z+ A^Ty }^2 + y^T(b-Av)-z^Tv.
\end{array}
\]
Therefore, we derive the \textdef{KKT optimality conditions} 
for the primal dual variables $(x,y,z)$ with
     $z = \begin{pmatrix}z_1 \cr 0 \end{pmatrix}, x_1\geq 0, z_1\geq 0$,
as follows

\begin{equation*}
    \begin{array}{ll}
        \nabla_x  L_f(x,y,z) = x - v - A^Ty - z  = 0,
& \text{(dual feasibility)} \\ 
        \nabla_y  L_f(x,y,z) = Ax - b  = 0, & \text{(primal feasibility)}\\ 
    \nabla_z  L_f(x,y,z) \cong  x  \in (\Rnp-z)^+.  &
\text{(complementary slackness $z_1^Tx_1=0$)}
    \end{array}
\end{equation*}

The standard KKT optimality conditions for primal-dual variables $(x,y,z)$
can be rewritten as:

\[
	\begin{pmatrix} x - v - A^Ty - z \\ Ax - b \\ z^Tx \end{pmatrix}
= \begin{pmatrix} 0 \\ 0 \\ 0\end{pmatrix}, \quad x_1,z_1 \in \Rnop, y \in
\R^m,\,
     z = \begin{pmatrix}z_1 \cr 0 \end{pmatrix}.
\]
Note $v+A^Ty = x-z = x+(-z)$. Therefore this is a Moreau decomposition of $v+A^Ty$, 
with $x^Tz=0, x,z\in \Rnp$, $x = (v+A^Ty)_+$. Therefore, we
get $A(v+A^Ty)_+ = b$, where we modify the definition of $_+$ 
so that we project only the first part corresponding to $x_1$ onto the
nonnegative orthant $\Rnop$ and then
this means $z_1=-\left((v+A^Ty)_1\right)_-$.

We see that the optimality conditions

\[
A
\begin{pmatrix} \left((v+A^Ty)_1\right)_+ \cr (v+A^Ty)_2 \end{pmatrix}= b,\, 
x_1= \left((v+A^Ty)_1\right)_+, x_2= (v+A^Ty)_2
\]
imply that
\[
  z=-(v+A^Ty)_-,\, z^Tx=0, x,\,z\in \Rnp\,,
	x - v - A^Ty - z =0,
\]
i.e.,~$F_f(y)=0$, for some $y\in \Rm$.
\end{proof}
For a vertex, a basic feasible solution, 
we need $n$ active constraints. The equality
constraints $Ax=b$ account for $m$, leaving $n-m$ to choose among
$1,2,\ldots,n_1$, the constrained variables in $x_1$. This leaves

\[
m_1= n_1-(n-m) =m-(n-n_1)= m-n_2 \implies \textdef{$m_1 = m-n_2$},
\text{  basic variables}.
\]

\subsection{Triangle Inequalities}
We can obtain an efficient projection onto a large set of triangle
inequalities that arise as cuts in graph problems, 
e.g.,~\cite{Piccialli_2022}.
We let $G=(V,E)$ denote a graph with vertex set $V$ and edge set $E$,
and define the sets:

\[
\TT  := \{(u,v,w) : u<v<w\in V \},
\]
and the corresponding \textdef{triangle inequalities}, where the weight
vector $x=\left(x_{uv}\right)_{uv\in E}$ here has two indices for the edge $uv$
connecting vertices $u,v$,

\begin{equation}
\label{eq:Tineq}
(I) \quad \left\{
\begin{array}{c}
\begin{array}{rcl}
x_{vw}-x_{uv}-x_{uw} &\leq& 0 \\
x_{uw}-x_{uv}-x_{vw} &\leq& 0 \\
x_{uv}-x_{vw}-x_{uw} &\leq& 0 \\
\, \forall (u,v,w) \in  \TT
\end{array} \\
0\leq x_{uv} \leq 1, \, \forall (u,v) \in E
\end{array}\right\}.
\end{equation}

We could rewrite this as a standard feasibility-seeking problem or as a best
approximation problem,  i.e.,~given an $\bar x$ we want to find the
nearest point to $\bar x$ that is in  a subset of triangle
inequalities defined by the matrix
$T$, namely with slacks $s,t$ and $e$ the vector of ones,
\index{$e$, vector of ones}
\index{vector of ones, $e$}

\[
\min \frac 12 \|x-\bar x\|^2 \text{  s.t.  } Tx+s = 0, x+t =e,\,\, 
x,t \geq 0, \, s\geq 0.
\]

We generated and solved random problems. The algorithm was very
efficient though we do not report the details here.

\section{Numerics} 
\label{sect:numerics}

\index{\RNNM, regularized nonsmooth Newton method}
\index{regularized nonsmooth Newton method, \RNNM}

In this section we compare the Regularized Nonsmooth Newton Method,
(\RNNMp), (exact and inexact) with the \HLWB method \cite{1996_Ba}
described in~\Cref{sec:BAP}, MATLAB's \emph{lsqlin} interior point
solver, {\crb and the 
\emph{quadratic programming proximal augmented
Lagrangian method}, (\QPPAL) \cite{LiangToh22}.} 
 \label{page:lsqlin}Recall our \BAPp, \Cref{eq:projproblemhyp}, 
and the pseudocode for
\HLWB in \Cref{alg:extHLWB} in \Cref{app:pseudos}. We show in our
experiments that \RNNM (exact) significantly outperforms the other methods.
These experiments are performed with an i7-4930k @ 3.2GHz, 16 GBs of RAM, and
MATLAB 2022b software.
\index{quadratic programming proximal augmented Lagrangian method, \QPPALp}
\index{\QPPALp, quadratic programming proximal augmented Lagrangian method}

Before {\crb comparing} the differences in performance of the algorithms {\crb we are experimenting with},
we elaborate on {\crb our implementation of} the \HLWB method, see also~\Cref{sec:BAP}.
\HLWB projects onto individual convex sets {\crb and computes
the next iterate}, $x^{k+1}$, by taking a specific convex {\crb combination. This combination
is determined} by a sequence of steering parameters, 
as defined in~\Cref{def:Steering-params}, and the initial point $v$, commonly referred to as the
anchor point in~\Cref{prob:BAP}. Traditionally, each projection 
is called an
\textdef{iteration}, and the collection of these iterations
%individual convex set in the problem \Cref{eq:projproblemconv}
is defined as a \textdef{sweep}~\cite{MR3616647}. In the
context of problem \Cref{eq:projproblemhyp}, \HLWB is iterating onto
one of the hyperplanes (sets) defined by the rows of $A$, 
denoted $a^{i_k}$, as well as the nonnegative
orthant. {\crb We complete a sweep once we project onto all the hyperplanes and onto the nonnegative orthant.}
(See steps \ref{line:A3startIf}-\ref{line:A3endIf} of \Cref{alg:extHLWB}.) 
Thus, we relate one sweep of \HLWB with one iteration of \RNNM.

\subsection{Time Complexity}
\label{sect:timecompl}
Since \RNNM is a second-order method and \HLWB is a first-order method,
we now discuss theoretical time complexity differences. 
From the  \RNNM algorithm,  \Cref{alg:ExactNonsmoothNewton}, we can see that worst-case time complexity is $O(m^3 + m^2n)$ \footnote{See \Cref{alg:ExactNonsmoothNewton} lines \ref{line:StartCompA1}-\ref{line:EndCompA1},
the total time complexity respectively is: $m^2n + m^2 + m^3 + n + 2n +
mn +2n + mn + n + m + 1 =m^2n +  m^3 + m^2 + 2mn + 5n + m + 1 = O(m^3 +
m^2n)$.} flops, of which every step but solving the linear system is efficiently parallelizable. 
It is worth mentioning that in line \ref{line:solvepd} of \Cref{alg:ExactNonsmoothNewton}, the linear system we are solving is positive definite and sparse. 
Therefore, it can be solved efficiently using the Cholesky decomposition.
From the \HLWB algorithm, \Cref{alg:extHLWB}, we can see that worst-case time complexity per iteration is  $O(mn)$ and per sweep is $ O(m^2n)$, of which every step is efficiently parallelizable. 
\footnote{See \Cref{alg:extHLWB} lines
\ref{line:StartCompA3}-\ref{line:EndCompA3}; the total time complexity respectively per iteration that projects
onto a half space is $(2n + 2) + 1 + (n + 2) + (mn + m + 1) = mn + 3n +
m + 6 = O(mn) $ flops.
Similarly, the total time complexity respectively per iteration that projects onto the nonnegative orthant is: $n + 1 + (n + 2) + (mn + m + 1) = mn + 2n + m + 4 = O(mn) $  flops of which  all flops are efficiently parallelizable. 
Therefore, in terms of sweeps the \HLWB method computes $m(mn + 3n + m + 6) +  mn + 2n + m + 4 = m^2n + 4mn + m^2 + 2n + 7m + 4 = O(m^2n) $ flops.}

From the perspective of theoretical time complexity it would be easy to
assume that \HLWB is the preferable algorithm as each of it's iterations are composed of operations that are completely parallelizable and each first-order sweep has an overall lower time-complexity. However, without performing numerical tests with varying parameters $m$ and $n$, we cannot yet conclude how a first-order method compares to a second-order method in terms of desired performance, especially as $m$ and $n$ get extremely large as observed in practice.

\subsection{Comparison of Algorithms}
When performing our numerical experiments, we refer to the discussion on techniques for comparisons
of algorithms given in \cite{MR3719098}.
In particular, we include performance profiles \cite{MR1875515}, and\label{page:enexact}
tables of the performances for \RNNM (exact and inexact), {\crb \HLWB, \emph{lsqlin}, and for QPPAL}. 

We compare the \HLWB algorithm to \RNNM by generating
a test problem {\crb with the form specified in \Cref{eq:projproblemhyp}. In this test problem,  the anchor 
$v$ lies in the relative
interior of the normal cone (negative of the polar cone)
of a vertex of the feasible polyhedron. Therefore, the vertex is the closest point to $v$.
Additionally, to ensure meaningful comparisons, we set $\norm{A} = 1$ and $\norm{v} = 1$ as no convergence results for \RNNM solving \Cref{eq:projproblemhyp} have been proven, as far as we know.}

The \RNNM algorithm starts with initializing  $x_0 \gets (v + A^Ty_0)_+$, where either $y_0 = 0_m$ or we are given
a $y_0$ for a warm start {\crb (as discussed in our LP application). Then,}  $x_0 \gets (v + A^Ty_0)_+$ reduces to $x_0 \gets \max(v,0)$ 
in the initialization stage of \RNNMp. Therefore, to ensure all algorithms start at the same point, we initialize $x_0 \gets  \max(v,0)$
for \HLWB, and provide $x_0 \gets  \max(v,0)$ as a warm start for
MATLAB's \emph{lsqlin} solver. {\crb Since \QPPAL performs an \ADMM warm-start, there is no way to provide a warm start point for it}.

Since \RNNM solves a reduced KKT
condition for a convex problem, the term $\frac{\norm{F(y_k)}}{1 +
\norm{b}}$ is a sufficient relative residual to serve as a 
stopping condition for \RNNMp.
Since \HLWB is a first order method, {\crb its stopping criterion is
measured at the end of a sweep, rather than at} the end of an iteration.
Furthermore, \HLWB does not have any proper stopping criterion, but 
converges in the limit{\crb. Therefore, we use the relative primal feasibility
residual,
i.e., $\frac{\norm{A\hat x_k-b}}{1 + \norm{b}}$, as the stopping criterion.} 
Note that we use $y_k$ instead of
$x_k$ in the stopping criterion as $\hat x_k$ is nonnegative 
at the end of every sweep. The \emph{lsqlin} solver uses first-order 
optimality conditions. {\crb As in \emph{lsqlin}, \QPPAL uses
first-order optimality conditions, and we report the relative optimality
gap, $\displaystyle |p^* - d^*|/\left(1 + (|p^*| + |d^*|)/2\right)$ for the relative residual
of \QPPAL.
Before discussing the generation of the problems, it is
worth noting that we are choosing to use \QPPAL's
Cholesky decomposition direct solver instead of its inexact solver. 
In addition, we increase the maximum number of iterations for the two phases of \QPPAL to match the maximum number of sweeps the other methods utilize. Furthermore, we inform \QPPAL that the
quadratic has $Q = I$, the identity.}

In \Cref{sect:nondegvertex}, we generate problems such that $v$ lies in the relative  interior of the normal cone of a
nondegenerate vertex. We also experiment with degenerate vertices, but
observe very similar results.
%and in \Cref{sect:degvertex}, we generate problems
%similarly for a degenerate vertex. 
These tests, and the performance of
the \RNNM algorithm {\crb help to motivate} the theory
and potential practice of using \RNNM for LP applications, as seen in \Cref{sect:LPappTest}.

For the performance profiles in \Cref{sect:nondegvertex},
we use the following notation from
\cite{MR3719098}. Let $P$ { \crb denote our set of problems with varying $m$, $n$, and density. Similarily, let $S$ represent our set of solvers,
\RNNM  (exact and inexact), \HLWB, \emph{lsqlin}, and \QPPAL}. We define the
performance measure $t_{p,s} > 0$ {\crb for each pair $(p,s) \in P
\times S$ as the computational time of solver $s$
to solve problem $p$}. For each problem $p \in P$ and solver $s \in
S$, we define the performance ratio as

\[
	r_{p,s} = 
	\begin{cases}
\frac{t_{p,s}}{\min\{t_{p,s} \, : \, s \in S\}}, & \text{if convergence test passed}, \\ 
\infty,	   & \text{if convergence test failed}.
	\end{cases}
\]
{\crb The solver $s$ that performs the best on problem $p$ will have
a performance ratio of $1$. Solvers that perform worse than $s$
on problem $p$ will satisfy $t_{p,s} > 1$. In other words, the larger the performance ratio, the worse the solver performed on problem $p$}.

 The performance profile of a solver $s$ is defined as

\[
	\rho_s(\tau) = \frac{1}{|P|} \text{ size} \{p \in P \, : \, r_{p,s} \leq \tau\}.
\]
Therefore, $\rho_s(\tau)$ {\crb represents} the relative portion of time {\crb in which} the performance ratio $r_{p,s}$ for solver $s$ is within a factor $\tau \in \R$ of the best possible performance ratio.

\subsubsection{Numerical Comparisons}
\label{sect:nondegvertex}
We tested the algorithms with optimal solutions at:
nondegenerate vertices, degenerate vertices and non-vertices. They all exhibited
similar results. Therefore, we present results restricted to nondegenerate
vertices.
We begin with choosing $v$ for \Cref{eq:projproblemhyp} such
that the optimum is uniquely a nondegenerate vertex of $P$. In the
tables below we vary {\crb $m$, $n$,} and the problem density to
illustrate the changes
in each solver's performance. A data point in each table is the arithmetic mean of 5 randomly generated problems of the specified parameters that also satisfy $\norm{A} = 1, \, \norm{v} = 0.1$. For example, the first row of \Cref{table: nondegenerate sizeM3}
represents a problem with parameters $m = 500, \, n = 3000$, and a density of $0.0081$, and each solver will solve 5 randomly generated problems of the form discussed in \Cref{eq:projproblemhyp}, and the average time and relative residual from solving all 5 problems is displayed in the table. The desired stopping tolerance for the tables and performance profiles is $\varepsilon = 10^{-14}$ and maximum iterations (sweeps) is $2000$ for all solvers. {\crb Lastly, it should be noted that the regularization parameter of \RNNM for these experiments is chosen in an adaptive way. It takes into account the relative residual as defined in line \ref{line:stopcrit} of \Cref{alg:ExactNonsmoothNewton}, the norm of the Newton direction, and the norm of $v$. The purpose of this is to decrease the amount of regularization as we approach the optimal solution while accounting for the norms of the Newton direction and $v$. This regularization parameter is explicitly defined as

\begin{equation}
\begin{array}{c}
\label{eq:reg_param}
	\lambda_{k+1} = \mean \left(\left(10^{-2} F_k\right) \max(1,
	\log_{10}(\|d_k\|)), \left(10^{-3}  F_k\right) \max(1, \log_{10}(\|v\|)), 10^{-3} F_k \right),
\end{array}
\end{equation}
where $F_k$ is the relative residual at iteration $k$, and $d_k$ is the
Newton direction.
}

From \Cref{table: nondegenerate sizeM3,table: nondegenerate sizeN3,table:
nondegeneratedense3}, the empirical evidence demonstrates the
superiority of the \RNNM (exact) approach over the other solvers.
Since the \RNNMp's reduced KKT system is $m \times m$ and solved using the Cholesky Decomposition, it's performance should be affected most noticeabley as $m$ varies or density increases. 
This theoretical observation can be seen in \Cref{table: nondegenerate sizeM3,table: nondegenerate sizeN3,table:
nondegeneratedense3}, as the \RNNM (exact and inexact) algorithm is slower to converge for increasing $m$ and density,

\label{page:seen}
but is not affected by an increase in $n$.

From  \Cref{fig: nondegenerate perprofilemn}  the empirical evidence shows similar results to the tables, but better demonstrates the differences in performance between 
 \RNNM (exact) and the other solvers. The problems in \Cref{subfig:
sizeM3} are similar to those of \Cref{table: nondegenerate sizeM3} except $m$ varies by $100$ from $100$ to $2000$. 
Similarly, the problems in \Cref{subfig: sizeN3} have $n$ varying by $100$ from $3000$ to $5000$, and  \Cref{subfig: dense3} has density varying by 1\% from 1\% to 100\%. 
\label{page:have}In every performance profile, the
\RNNM (exact) algorithm clearly
{\crb outperforms the other solvers in our experiments}, with  \RNNM (inexact) performing well for an inexact method on mid-sized problems. 
{\crb Conversely}, \HLWB is relatively slow on these problems. {\crb This can be attributed to its linear convergence rate. Due to it's linear convergence, it will perform a large number of sweeps, which can amount to millions of iterations on certain problems with large $m$.}
Performance profiles can be found in \Cref{subsect: nondeg perf} with
the stopping tolerances $\varepsilon = 10^{-2}, 10^{-4}$, to illustrate
that \RNNM (exact) outperforms {\crb HLWB and \emph{lsqlin} at different tolerances, but \QPPAL remains competitive}. 
\begin{table}[H]
{\small
\flushleft
\caption{Varying problem sizes $m$; comparing computation time and
relative residuals.}
\scalebox{0.8}{
\begin{tabular}{|ccc||ccccc|ccccc|} \hline
\multicolumn{3}{|c||}{Specifications} & \multicolumn{5}{|c|}{Time (s)} & \multicolumn{5}{|c|}{Rel. Resids.}\cr\cline{1-13}
  $m$&   $n$& \% density&  Exact & Inexact &    HLWB &  lsqlin &   QPPAL &   Exact & Inexact &    HLWB &  lsqlin &   QPPAL \cr\hline
  500 &  3000 & 8.1e-01 &4.23e-02 & 1.51e-01 & 1.54e+02 & 3.77e+00 & 1.14e+00 &  1.96e-16 & 8.26e-16 & 2.25e-04 & 7.26e-17 & 1.72e-17 \cr\hline
 1000 &  3000 & 8.1e-01 &4.40e-01 & 9.97e-01 & 3.71e+02 & 5.37e+00 & 2.15e+00 &  2.70e-16 & 1.95e-15 & 2.14e-04 & 3.87e-17 & 2.70e-17 \cr\hline
 1500 &  3000 & 8.1e-01 &1.17e+00 & 3.23e+00 & 6.09e+02 & 7.02e+00 & 4.69e+00 &  3.41e-17 & 6.73e-16 & 2.27e-04 & 3.95e-17 & 1.16e-17 \cr\hline
 2000 &  3000 & 8.1e-01 &2.49e+00 & 7.51e+00 & 8.67e+02 & 1.02e+01 & 7.81e+00 &  6.11e-17 & 3.11e-17 & 2.24e-04 & 3.14e-17 & -2.74e-17 \cr\hline
\end{tabular}

}
\label{table: nondegenerate sizeM3}
}
\end{table}

\begin{table}[H]
{\small
\flushleft
\caption{Varying problem sizes $n$; comparing computation time and
relative residuals.}
\scalebox{0.8}{
\begin{tabular}{|ccc||ccccc|ccccc|} \hline
\multicolumn{3}{|c||}{Specifications} & \multicolumn{5}{|c|}{Time (s)} & \multicolumn{5}{|c|}{Rel. Resids.}\cr\cline{1-13}
  $m$&   $n$& \% density&  Exact & Inexact &    HLWB &  lsqlin &   QPPAL &   Exact & Inexact &    HLWB &  lsqlin &   QPPAL \cr\hline
  200 &  3000 & 8.1e-01 &3.12e-03 & 3.69e-02 & 4.45e+01 & 3.50e+00 & 8.66e-01 &  8.64e-18 & 7.39e-17 & 2.56e-04 & 6.52e-16 & 5.89e-17 \cr\hline
  200 &  3500 & 8.1e-01 &3.08e-03 & 4.05e-02 & 5.17e+01 & 4.93e+00 & 1.00e+00 &  9.07e-18 & 1.26e-17 & 2.78e-04 & 1.23e-15 & 2.15e-17 \cr\hline
  200 &  4000 & 8.1e-01 &3.24e-03 & 3.70e-02 & 5.82e+01 & 7.31e+00 & 1.09e+00 &  1.46e-16 & 8.91e-16 & 2.80e-04 & 3.21e-16 & -9.18e-18 \cr\hline
  200 &  4500 & 8.1e-01 &3.99e-03 & 4.17e-02 & 6.58e+01 & 1.01e+01 & 1.18e+00 &  1.80e-15 & 2.05e-16 & 3.13e-04 & 4.61e-17 & 1.71e-16 \cr\hline
\end{tabular}

}
\label{table: nondegenerate sizeN3}
}
\end{table}

\vspace{.1in}

\begin{table}[H]
{\small
\flushleft
\caption{Varying problem density; comparing computation time and relative residuals.}
\scalebox{0.8}{
\begin{tabular}{|ccc||ccccc|ccccc|} \hline
\multicolumn{3}{|c||}{Specifications} & \multicolumn{5}{|c|}{Time (s)} & \multicolumn{5}{|c|}{Rel. Resids.}\cr\cline{1-13}
  $m$&   $n$& \% density&  Exact & Inexact &    HLWB &  lsqlin &   QPPAL &   Exact & Inexact &    HLWB &  lsqlin &   QPPAL \cr\hline
  300 &  1000 &   25 &5.69e-02 & 2.66e-01 & 4.55e+01 & 3.30e-01 & 1.20e+00 &  2.83e-17 & 1.14e-17 & 1.50e-04 & 8.61e-17 & 5.99e-17 \cr\hline
  300 &  1000 &   50 &5.43e-02 & 2.28e-01 & 5.39e+01 & 3.08e-01 & 1.82e+00 &  1.23e-16 & 1.97e-17 & 1.44e-04 & 8.08e-16 & 1.42e-17 \cr\hline
  300 &  1000 &   75 &7.75e-02 & 2.86e-01 & 5.36e+01 & 3.16e-01 & 1.49e+01 &  4.83e-16 & 1.72e-17 & 1.62e-04 & 3.49e-16 & -3.43e-16 \cr\hline
  300 &  1000 &  100 &7.27e-02 & 2.47e-01 & 4.65e+01 & 3.00e-01 & 2.54e+02 &  5.66e-16 & 2.15e-17 & 1.63e-04 & 1.91e-15 & 1.04e-14 \cr\hline
\end{tabular}

}
\label{table: nondegeneratedense3}
}
\end{table}

\begin{figure}[H]

\centering

\begin{subfigure}[t]{0.45\linewidth}
	\centering
	\includegraphics[width = 0.95\linewidth, height =
	0.25\textheight, keepaspectratio]{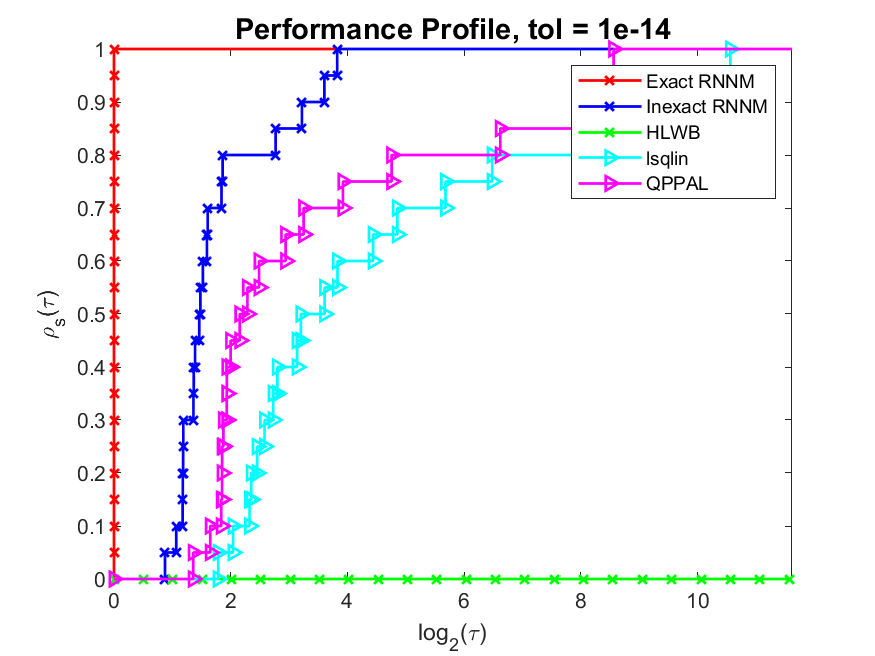}
	\caption{Varying problem sizes $m$.}
	\label{subfig: sizeM3}
\end{subfigure}
\begin{subfigure}[t]{0.45\linewidth}
	\centering
	\includegraphics[width = 0.95\linewidth, height =
	0.25\textheight, keepaspectratio]{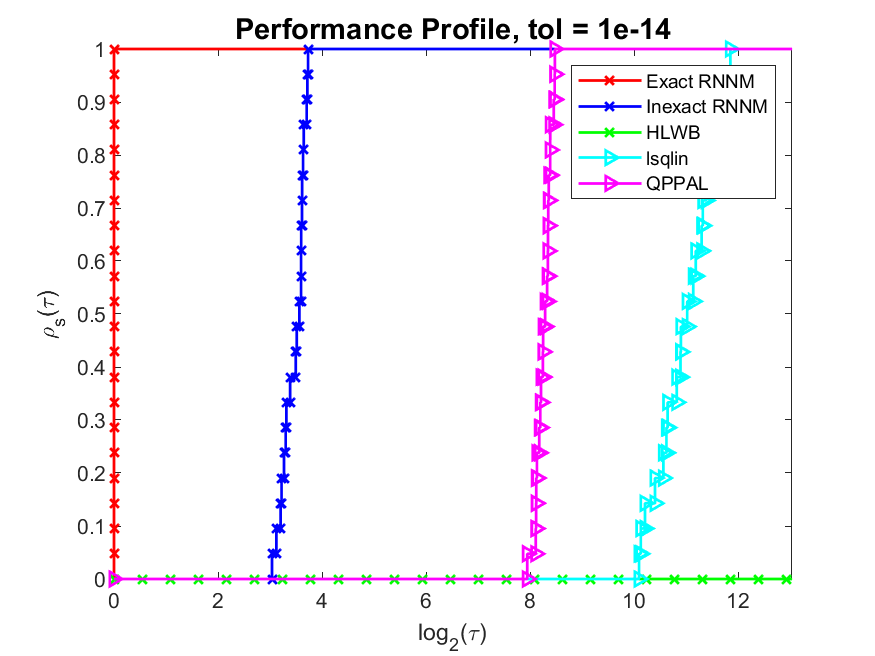}
	\caption{Varying problem sizes $n$.}
	\label{subfig: sizeN3}
\end{subfigure}
\begin{subfigure}[t]{0.45\linewidth}
	\centering
	\includegraphics[width = 0.95\linewidth, height =
	0.25\textheight, keepaspectratio]{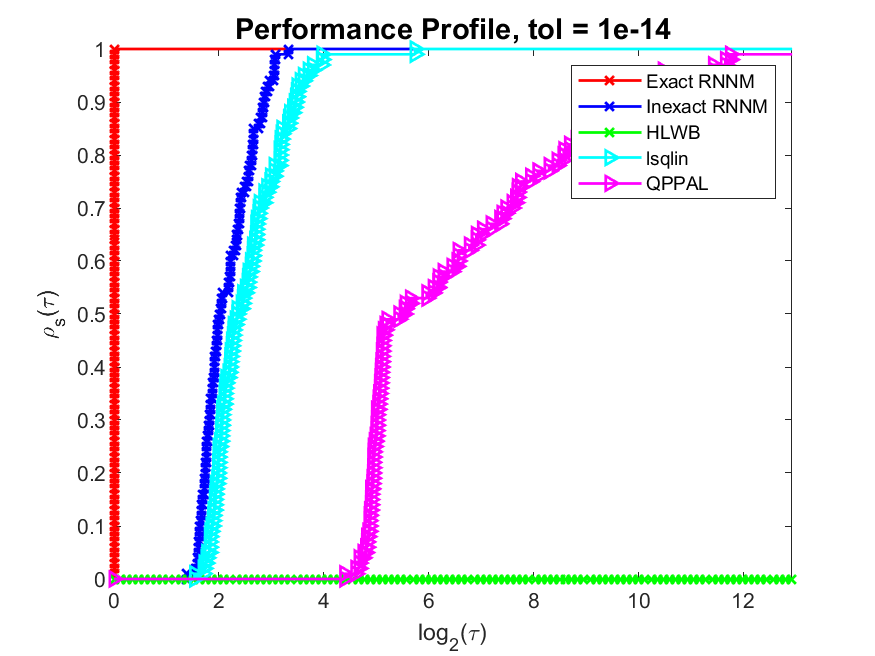}
	\caption{Varying problem density.}
	\label{subfig: dense3}
\end{subfigure}
\caption{Performance profiles for problems with varying $m$, $n$, and densities for nondegenerate vertex solutions.}
\label{fig: nondegenerate perprofilemn}
\end{figure}

\subsection{Solving Large Sparse Linear Programs} 
\label{sect:LPappTest}
We now apply~\Cref{eq:LPxR} and~\Cref{thm:pathfollowR} 
to solve {\crb large-scale randomly generated \LPp s, and problems from
the NETLIB dataset}.  {\crb We call this method the \emph{stepping
stones external path following algorithm}, (SSEPF)}, and note
that we use the estimate for a starting $R$ given
in~\cref{eq:Restimate}. \index{ stepping stones external path following algorithm,SSEPF} The stepping stones are found using $R_n$ in~\cref{eq:Rn}. We add a small decreasing scalar to $R_n$ to ensure
that we {\crb change the basis of $A$ at each iteration}.
{\crb For simplicity, we restrict ourselves to
nondegenerate \LPp s for the randomly generated problems}.

We compare SSEPF with the MATLAB \emph{linprog} code, using both the dual simplex and the
interior-point algorithms. {\crb We also compare with Mosek's dual
simplex and interior point method, and with the
\emph{semismooth Newton inexact proximal augmented Lagrangian method}, 
(SNIPAL) \cite{MR4146384}}. 
We use randomly generated problems scaled so
that $\norm{A}=1$, and the optimal solution $x^*$ satisfies $\norm{x^*}=1$.
A data point in \Cref{table:LPAppl} is the
arithmetic mean of $5$ randomly generated problems of the specified
parameters. We exclude instances
where a {\crb method fails to provide a solution from \Cref{table:LPAppl} for clarity, but these instances are plotted in \Cref{fig: LPperf} as a failure to converge}.
{\crb Since the smallest stopping tolerance allowed by \emph{linprog} is $\varepsilon = 10^{-10}$, a linear program is considered successfully solved in the performance profile of \Cref{fig: LPperf}  if the optimality gap is less than or equal to  $\varepsilon = 10^{-8}$}. The maximum number of iterations
for \emph{linprog} and Mosek is the default number, {\crb and for SNIPAL
it is $2000$}. The relative residual shown {\crb in} \Cref{table:LPAppl} is the
sum of {\crb the} relative primal feasibility, dual feasibility, and
complementary slackness. In other words, let $(x^*,y^*,z^*)$ be the
optimal solution {\crb an algorithm returns}, then the relative residual
as shown in the table is
\index{semismooth Newton inexact proximal augmented Lagrangian method, SNIPAL} 

\[
	\frac{\norm{Ax^* - b}}{1 + \norm{b}} + \frac{\norm{z^* - A^Ty^*
+ c}}{1 + \norm{c}} + \frac{(x^*)^Tz^*}{1 + \max(\norm{x^*},\norm{z^*})}.
\]

{\crb When discussing the performance of SSEPF, it should be noted that
we are using the exact \RNNM direction to solve the BAP subproblem, and using
\cref{eq:reg_param} to compute the regularization parameter. We denote
this in \Cref{table:LPAppl} and  \Cref{fig: LPperf} as SSEPF-RNNM}.
{\crb Furthermore, we use the abbreviations Linprog DS and Linprog IPM to refer to \emph{linprog's} dual simplex and
interior point method, respectively. Likewise, 
we use similar abbreviations for Mosek}.

From \Cref{table:LPAppl}, the empirical evidence demonstrates that the
stepping stone approach performs better than {\crb MATLAB's dual simplex
and interior point method on most problems, and has proven to be quite
competitive with Mosek's dual simplex and interior point method}. This
becomes more evident as

\label{page:grow}the {\crb sizes} of the problems grow and the problems become sparser. {\crb In other words, }
we see that our code fully exploits sparsity in \LPp. {\crb This can be seen when observing the performance of SSEPF-RNNM with respect to time on the rows of \Cref{table:LPAppl} where the problem density decreases. Despite the increase in problem dimension, the decrease in density leads to an increase in performance in comparison to the previous row}.
 {\crb Another thing to} notice {\crb is} that in rows $5$-$9$ of \Cref{table:LPAppl},
{\crb \emph{linprog's}} interior point method {\crb and Mosek's dual simplex method} failed to converge to a solution 
after having reached the default maximum number of iterations. 

In \Cref{sect:nondegvertex}, the performance profiles were constructed by looking at smaller intervals of varying $m,n$ and density.
For example \Cref{table: nondegenerate sizeM3} shows results where $m$
varies by increments of $500$, but in \Cref{subfig: sizeM3} $m$ varies
by increments of $100$.
Since \emph{linprog's} interior point method {\crb and Mosek's dual simplex method} struggled with obtaining {\crb the desired}
primal feasibility, as seen in \Cref{table:LPAppl}, \Cref{fig: LPperf} shows 
the performance of each solver with respect to all 50 problems instead of examining the average performance. 

It is important to note that the performance profile exhibits more
failed solutions from the dual simplex and interior point methods of
MATLAB. We have tried taking the maximum of the primal feasibility, dual feasibility, and complementary slackness returned by 
MATLAB's \emph{linprog} function instead of the sum, and both revealed
equivalent results. In other words, we are not sure why there are more
problems failing at this tolerance than reported by MATLAB, but it further distinguishes our
stepping stone approach from MATLAB's \emph{linprog} algorithms. {\crb
Mosek, and more specifically Mosek's interior point method is very
competitive, as  \Cref{fig: LPperf} shows. {\crb Unfortunately, SNIPAL failed to converge on every problem in this dataset.
We have seen it converge successfully on some random linear programming problems, but none of the ones that we
generated in our Numerical Experiments section}. It is worth noting that the
table which shows the average performance of $5$ randomly generated
problems with respect to a set of parameters indicates that SSEPF-\RNNM
performs better than Mosek's interior point method in $7$ out of $10$ rows
in the table.}

\begin{table}[H]
{\small
\flushleft
\scalebox{0.55}{
\begin{tabular}{|ccc||cccccc|cccccc|} \hline
\multicolumn{3}{|c||}{Specifications} & \multicolumn{6}{|c|}{Time (s)} & \multicolumn{6}{|c|}{Rel. Resids.}\cr\cline{1-15}
  $m$&   $n$& \% density&   SSEPF-RNNM & Linprog DS & Linprog IPM & MOSEK DS & MOSEK IPM &  SNIPAL &    SSEPF-RNNM & Linprog DS & Linprog IPM & MOSEK DS & MOSEK IPM &  SNIPAL \cr\hline
2e+03 & 5e+03 & 1.0e-01 &8.94e-02 & 3.09e-02 & 4.50e-02 & 1.46e-01 & 1.64e-01 & 6.90e+00 & 3.38e-17 & 2.63e-16 & 4.88e-09 & 1.31e-16 & 1.53e-16 & 2.14e-04 \cr\hline
2e+03 & 1e+04 & 1.0e-01 &9.64e-02 & 4.84e-02 & 7.53e-02 & 1.49e-01 & 1.93e-01 & 8.31e+00 & 2.82e-17 & 6.00e-16 & 1.60e-04 & 1.31e-16 & 2.89e-16 & 1.72e-04 \cr\hline
2e+03 & 1e+05 & 1.0e-01 &1.68e-01 & 3.91e-01 & 7.45e-01 & 5.41e-01 & 6.56e-01 & 1.94e+01 & 1.48e-17 & 7.45e-17 & 1.72e-05 & 8.84e-17 & 8.57e-17 & 1.55e-04 \cr\hline
5e+03 & 1e+04 & 1.0e-01 &9.97e+01 & 2.08e-01 & 1.39e+01 & 4.26e-01 & 2.65e+00 & 5.54e+01 & 5.55e-17 & 4.16e-16 & 5.02e-07 & 1.67e-14 & 3.20e-16 & 2.29e-04 \cr\hline
5e+03 & 1e+05 & 1.0e-01 &7.64e+01 & 7.24e-01 & 1.42e+02 & 1.12e+00 & 8.51e+00 & 7.85e+01 & 2.36e-17 & 9.31e-11 & 6.38e-05 & 3.13e-16 & 1.79e-16 & 1.58e-04 \cr\hline
5e+03 & 5e+05 & 1.0e-01 &2.30e+02 & 6.97e+00 & 6.54e+02 & 7.02e+00 & 1.52e+01 & 1.70e+02 & 1.52e-17 & 1.87e-10 & 3.73e-05 & 3.92e-16 & 1.68e-16 & 1.48e-04 \cr\hline
2e+04 & 1e+05 & 1.0e-02 &6.32e-01 & 9.46e-01 & 5.68e+00 & 1.05e+00 & 2.49e+00 & 4.28e+01 & 1.36e-17 & 3.55e-06 & 4.33e-07 & 1.99e-06 & 1.28e-16 & 1.42e-04 \cr\hline
2e+04 & 5e+05 & 1.0e-02 &6.66e-01 & 4.46e+00 & 3.78e+01 & 5.63e+00 & 9.28e+00 & 1.23e+02 & 8.48e-18 & 3.37e-06 & 8.83e-07 & 1.36e-06 & 2.89e-16 & 1.10e-04 \cr\hline
2e+04 & 1e+06 & 1.0e-02 &1.85e+00 & 9.30e+00 & 6.50e+01 & 1.17e+01 & 1.59e+01 & 2.06e+02 & 7.08e-18 & 4.34e-06 & 6.27e-06 & 1.76e-06 & 9.65e-17 & 1.12e-04 \cr\hline
1e+05 & 1e+07 & 1.0e-03 &7.38e+00 & 1.06e+01 & 6.14e+00 & 9.35e+01 & 9.60e+01 & 1.56e+03 & 1.39e-18 & 1.39e-18 & 1.39e-18 & 1.76e-17 & 1.76e-17 & 5.90e-05 \cr\hline
\end{tabular}

}
\caption{LP application results averaged on $5$ randomly generated problems per row.}
\label{table:LPAppl}
}
\end{table}

\begin{figure}[H]
\centering
\includegraphics[width = 0.45\linewidth, height =
	0.25\textheight, keepaspectratio]{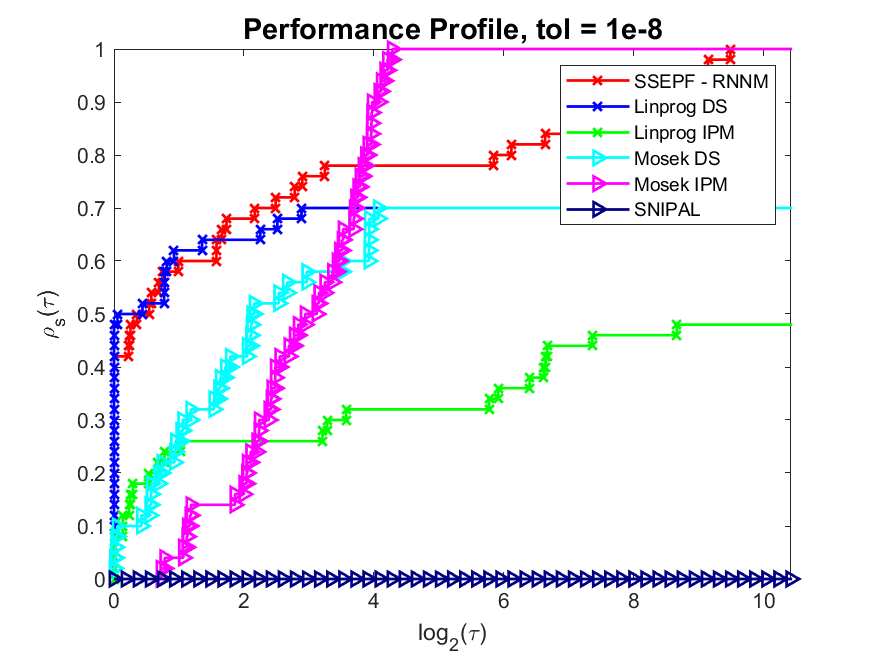}
\caption{Performance Profiles for LP application with respect to all problems.}
\label{fig: LPperf}
\end{figure}

{\crb
We also consider the first five problems in alphabetical order
from the subset of the NETLIB dataset where primal strict feasibility (PSF)
holds~\cite[Sect. 4.2.2]{ImWolk:22}.
We then check dual strict feasibility (DSF) and 
include the value of the constant we obtain
from solving the theorem of the alternative,
i.e.,~a large, respectively small, constant indicates an algebraically
\emph{fat}, respectively \emph{thin}, feasible set.
Failure, or near failure, of strict feasibility correlates with the
difficulty of the numerics. We successfully solve two of the five
problems. We think that the difficulties from the NETLIB dataset is due
to the dual feasible set being very thin for some problems. For example,
in \Cref{table:LPNetlibStrictFeas}, the problems 25fv47 and lotfi have a
very thin feasible set in the dual problem.

{\crb It is important to note that the performance of SSEPF-RNNM on the
blend problem is signifcantly worse than the other solvers. A common
issue with SSEPF-RNNM when solving the blend problem as well as rows
$4$-$6$ of \Cref{table:LPAppl} is that at certain tolerances, \RNNM uses
the maximum number of iterations ($2000$) to solve the \BAP subproblem.
In other words, even though we are performing a warm-start with the
solution from the previous \BAP subproblem, \RNNM can fail to converge
to the desired relative tolerance. However, even though \RNNM failed to
converge, it still provides a solution that is very close to the optimal
solution, i.e., instead of solving the \BAP subproblem to within a
relative tolerance of $10^{-14}$, it returns a solution that is within a
relative tolerance of $10^{-12}$ or $10^{-13}$. 
%\footnote{We believe
%this is due to the warm-start with the solution to the previous \BAP
%subproblem.} 
There are at least two solutions to this issue.
First, we can decrease the length of the Newton step when the iteration
count is large. Using this heuristic shows significant improvement in
performance when solving the blend problem.
Secondly, if \RNNM fails to converge to within the specified relative
tolerance of $10^{-14}$, we can try a larger relative tolerance, 
such as $10^{-13}$. This strategy has shown to be crucial when trying to solve
problems like 25fv47, where we are not able to solve the \BAP subproblem
with high accuracy due to it's thin dual feasible set.}
\begin{table}[H]
{\small
\flushleft
\scalebox{0.90}{
\begin{tabular}{|c||c|c|} \hline
\multicolumn{1}{|c||}{Problem:} & \multicolumn{1}{|c|}{Primal Strict Feas.} & \multicolumn{1}{|c|}{Dual Strict Feas.} \cr\hline
         25fv47 &2.00e-01 & 2.01e-17 \cr\hline
          afiro &9.00e+00 & 1.19e-01 \cr\hline
          blend &7.30e-02 & 3.49e-03 \cr\hline
         israel &3.71e+00 & 1.38e-03 \cr\hline
          lotfi &1.00e+00 & 1.89e-10 \cr\hline
\end{tabular}

}
\caption{Primal and Dual strict feasibility of NETLIB problems.}
\label{table:LPNetlibStrictFeas}
}
\end{table}

\begin{table}[H]
{\small
\flushleft
\scalebox{0.60}{
\begin{tabular}{|c||cccccc|cccccc|} \hline
\multicolumn{1}{|c||}{} & \multicolumn{6}{|c|}{Time (s)} & \multicolumn{6}{|c|}{Rel. Resids.}\cr\cline{1-13}
        Problem: &  SSEPF-RNNM & Linprog DS & Linprog IPM & MOSEK DS & MOSEK IPM &  SNIPAL &    SSEPF-RNNM & Linprog DS & Linprog IPM & MOSEK DS & MOSEK IPM &  SNIPAL \cr\hline
         25fv47  &  Inf & 2.01e-01 & 1.01e-01 & 3.76e-01 & 1.54e-01 & 1.20e+01 &     Inf & 2.30e-15 & 2.25e-15 & 5.51e-16 & 1.09e-14 & 7.36e-05 \cr\hline
          afiro & 2.62e-02 & 7.71e-03 & 2.91e-03 & 9.16e-02 & 9.01e-02 & 9.81e-02 & 1.97e-16 & 3.67e-16 & 8.62e-14 & 7.49e-17 & 1.43e-13 & 9.39e-11 \cr\hline
          blend & 1.42e+02 & 8.48e-03 & 3.81e-03 & 9.12e-02 & 9.03e-02 & 1.58e+00 & 5.37e-15 & 4.78e-14 & 1.31e-13 & 1.33e-15 & 1.63e-15 & 1.30e-03 \cr\hline
         israel &  Inf & 1.07e-02 & 2.79e-02 & 9.33e-02 & 9.82e-02 & 3.27e+00 &     Inf & 7.15e-16 & 8.44e-14 & 6.57e-16 & 8.93e-12 & 5.21e-05 \cr\hline
          lotfi &   Inf & 9.63e-03 & 7.86e-03 & 9.41e-02 & 9.43e-02 & 2.00e+00 &     Inf & 4.61e-14 & 3.38e-14 & 1.17e-16 & 9.05e-13 & 4.35e-05 \cr\hline
\end{tabular}

}
\caption{LP application results on the NETLIB problems.}
\label{table:LPNetlib}
}
\end{table}

\begin{figure}[H]
\centering
\includegraphics[width = 0.45\linewidth, height =
	0.25\textheight, keepaspectratio]{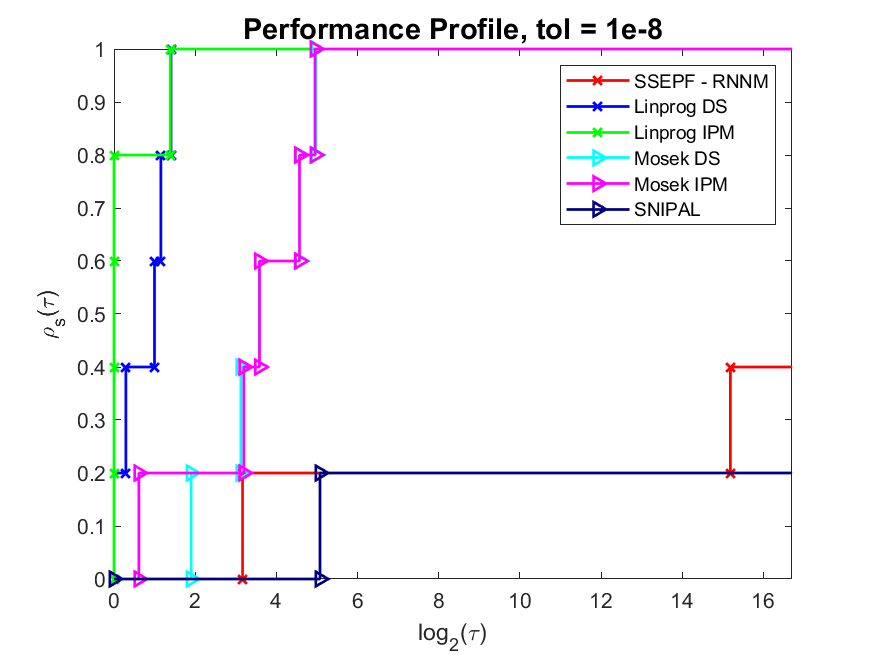}
\caption{Performance Profiles for LP application with respect to the Netlib problems.}
\label{fig: LPNetlibperf}
\end{figure}

Our algorithm has difficulties with highly
degenerate problems where the optimal solution
is not unique. Moreover, the optimal solution of minimum norm that our
algorithm finds can fail strict complementarity with many $x_i+z_i=0$.
The loss of strict complementarity results in a generalized Jacobian
with low rank as few columns of $A$ are chosen in~\cref{eq:maxrankM}.
Additionally, the sensitivity analysis of~\Cref{thm:pathfollowR} has difficulty
increasing $R$. Finally, the failure of strict complementarity indicates that
the gradient at optimality is not in the relative interior of the normal
cone, \Cref{lem:xoptvertex}, \Cref{item:nonnegpolar}, indicating failure
of differentiability of the projection.
}

\section{Conclusion}
\label{sect:concl}
In this paper we considered the theory and applications of the ``best
approximation problem'' of finding the projection of a point
onto a polyhedral set. We studied an elegant optimality condition,
derived using the Moreau decomposition, that
allowed for a, possibly both nonsmooth and singular,
Newton type method. However, this needed a 
perturbation of a max-rank choice of a generalized Jacobian, i.e.,~application
of nonsmooth analysis and regularization. The regularization guaranteed
a descent direction but the method was not necessarily monotonically
decreasing. We presented extensive comparisons with the \HLWB algorithm 
approach, e.g.,~\cite{1996_Ba}, and found that, in our experiments,
our method outperformed \HLWB in both speed and accuracy.

We discussed several applications including solving large, sparse,
linear programs. The preliminary tests we performed
were very efficient and outperformed the other codes 
we used for comparison both in speed
and accuracy. Our algorithmic approach can be
considered as a \textdef{stepping stone external path following} method
since we
follow an external path with parameter $R$ in the objective function; but
we only consider a discrete number of points on the path found using
sensitivity analysis. We discovered that very few stepping stones are needed,
often just one suffices.

\vspace{.5in}
 \label{page:acknow}
{\crb \textbf{Acknowledgements.} We thank the referees for carefully reading
the paper and for their helpful comments.}

\appendix

\section{Pseudocodes for Generalized Simplex}
\label{app:pseudos}

The pseudocodes described in 
\Cref{alg:ExactNonsmoothNewton,alg:InexactNonsmoothNewton,alg:extHLWB}
solve
\Cref{eq:projproblemhyp} using the exact and inexact nonsmooth Newton
methods \RNNMp, respectively. 

\begin{algorithm}
	\caption{\BAP of $v$ for constraints $Ax=b,x\geq 0$; exact 
	Newton direction}
	\label{alg:ExactNonsmoothNewton}
	\begin{algorithmic}[1]
		\REQUIRE $v \in \Rn,y_0\in \Rm, \, (A \in \R^{m \times
		n},\rank(A)=m),\, b \in \R^m, \, \varepsilon>0$, maxiter $\in \N$.
		\STATE \textbf{Output.} Primal-dual opt.: 
                $x_{k+1},(y_{k+1},z_{k+1})$
		\STATE \textbf{Initialization.} 
		$k \gets 0$, 
		$x_0 \gets (v+A^Ty_0)_+$,
		$z_0 \gets (x_0-(v+A^Ty_0))_+$,\\
		\qquad \qquad\qquad  $F_0 = Ax_0-b$,
		stopcrit $\gets \norm{F_0} / (1 + \norm{b})$ 
		\WHILE{((stopcrit $> \varepsilon) \,\&\, (k\leq
                $ maxiter)) }
		%	\IF{(Generalized Jacobian $V_k$ is nonsingular)}  
		%		\STATE $\bar V = V_k$ 
		%	\ELSE
				\STATE $V_k = \sum_{i \in\cI_+} A_i A_i^T
	{\crb	+ \sum_{i \in \bar \cI_0} \frac 1{\|A_i\|^2}A_i A_i^T}$ \label{line:StartCompA1}
			\STATE $\lambda = \min(1e^{-3}, \text{ stopcrit})$  
				\STATE $\bar V = (V_k+\lambda I_m)$ 
			\STATE $\text{solve pos. def. system } \bar V d = -F_k$ for Newton direction $d$  \label{line:solvepd}
			\STATE \textbf{updates}  
			\begin{ALC@g}
			\STATE $ y_{k+1} \gets y_k + d$  
			\STATE $x_{k+1} \gets (v + A^Ty_{k+1})_+$ 
			\STATE $z_{k+1} \gets  (x_{k+1}-(v+A^Ty_{k}))_+$ 
			\STATE $F_{k+1} \gets Ax_{k+1} - b$ \text{  (residual)}  \label{line:EndCompA1}
			\STATE stopcrit $\gets \norm{F_{k+1}} / (1+\norm{b})$  
\label{line:stopcrit}
			\STATE $k \gets k + 1$ 
			\end{ALC@g}
		\ENDWHILE
	\end{algorithmic}		
\end{algorithm}

\begin{algorithm}
	\caption{\BAP of $v$ for constraints $Ax=b,x\geq 0$, inexact Newton direction}
	\label{alg:InexactNonsmoothNewton}
	\begin{algorithmic}[1]
		\REQUIRE $v \in \Rn,y_0\in \Rm, \, (A \in \R^{m \times
		n},\rank(A)=m),\, b \in \R^m, \, \varepsilon>0$, maxiter $\in \N$.
		\STATE \textbf{Output.} Primal-dual: $x_{k+1},(y_{k+1},z_{k+1})$
		\STATE \textbf{Initialization.} 
		$k \gets 0$, 
		$x_0 \gets (v+A^Ty_0)_+$,
		$z_0 \gets (x_0-(v+A^Ty_0))_+$,\\
		\qquad \qquad\qquad $\delta \in (0,1], \, \nu \in [1 + \frac{\delta}2,2]$, and a sequence $\theta$ such that $\theta_k \geq 0$ and $\sup_{k \in \mathbb{N}} \theta_k < 1$\ \\ 
		\qquad \qquad\qquad  $F_0 = Ax_0-b$,
		stopcrit $\gets \norm{F_0} / (1 + \norm{b})$ \\
		\WHILE{((stopcrit $> \varepsilon) \,\&\, (k\leq
                $ maxiter))} 
			\STATE $V_k = \sum_{i \in\cI_+} A_iA_i^T
	{\crb	+ \sum_{i \in \bar \cI_0} \frac 1{\|A_i\|^2}A_i A_i^T}$
			\STATE $\lambda = (\text{stopcrit})^\delta$  
			\STATE $\bar V = (V_k+\lambda I_m)$ 
			\STATE $\text{solve } \bar V d = -F_k$ for 
  Newton direction $d$ such that residual
			$\norm{r_k} \leq \theta_k \norm{F_k}^\nu$ 
			\STATE \textbf{updates}  
			\begin{ALC@g}
			\STATE $ y_{k+1} \gets y_k + d$  
			\STATE $x_{k+1} \gets (v + A^Ty_{k+1})_+$ 
			\STATE $z_{k+1} \gets  (x_{k+1}-(v+A^Ty_{k}))_+$ 
			\STATE $F_{k+1} \gets Ax_{k+1} - b$ \text{  (residual)} 
			\STATE stopcrit $\gets \norm{F_{k+1}} / (1+\norm{b})$  
			\STATE $k \gets k + 1$
			\end{ALC@g}
		\ENDWHILE
	\end{algorithmic}		
\end{algorithm}
		
\begin{algorithm} 
	\caption{Extended \HLWB algorithm}
	\label{alg:extHLWB}
	\begin{algorithmic}[1]
		\REQUIRE $v \in \Rn, (A \in \R^{m \times
		n},\rank(A)=m),\, b \in \R^m, \, \varepsilon>0$, maxiter $\in \N$.  
		\STATE \textbf{Output.}  $x_{k+1}$
		\STATE \textbf{Initialization.} 
		$k \gets 0$, $msweeps \gets 0$ 
               	$x_0 \gets max(v,0)$, $\hat x_0 \gets x_0$, $i_0 = 1$ \\ 
		\qquad\qquad  \qquad 
		stopcrit $\gets \norm{A\hat x_0 - b} / (1 + \norm{b})$ ($=\norm{F_0} / (1 + \norm{b})$)
		\WHILE{((stopcrit $> \varepsilon) \,\&\, (k\leq
                $ maxiter))}  
			\IF{$1 \leq i_k \leq m$} 
				\STATE $\hat x_k = x_k +
				\frac {b_{i_k}-
				a_{i_k}^Tx_k}{\|a_{i_k}\|^2}a_{i_k}$ \label{line:StartCompA3}
			\ELSE
				\STATE $\hat x_k =\max(0,x_k)$ 
			\ENDIF
			\STATE \textbf{updates}  
			\begin{ALC@g}
			\STATE  $\sigma_k = \frac{1}{k+1}$ 
			\STATE  $ x_{k+1} \gets \sigma_k v + (1 -
			\sigma_k) \hat x_k$ 
			\STATE  stopcrit $\gets  \norm{A\hat x_k - b} / (1 + \norm{b})$ \label{line:EndCompA3}

			 \IF{$k (\text{mod } m + 1) = 0$} \label{line:A3startIf}
				\STATE $msweeps = msweeps + 1$ \label{line:A3middleIf}
			 \ENDIF \label{line:A3endIf}
			\STATE  $ i_k = k (\text{mod } m) + 1$
			 \end{ALC@g}
		\ENDWHILE
	\end{algorithmic}
\end{algorithm}

\section{Additional Performance Profiles}

\subsection{Nondegenerate}
\label{subsect: nondeg perf}
\begin{figure}[H]
\centering

\begin{subfigure}[t]{0.95\linewidth}
	\centering
	\includegraphics[height =
	0.26\textheight, keepaspectratio]{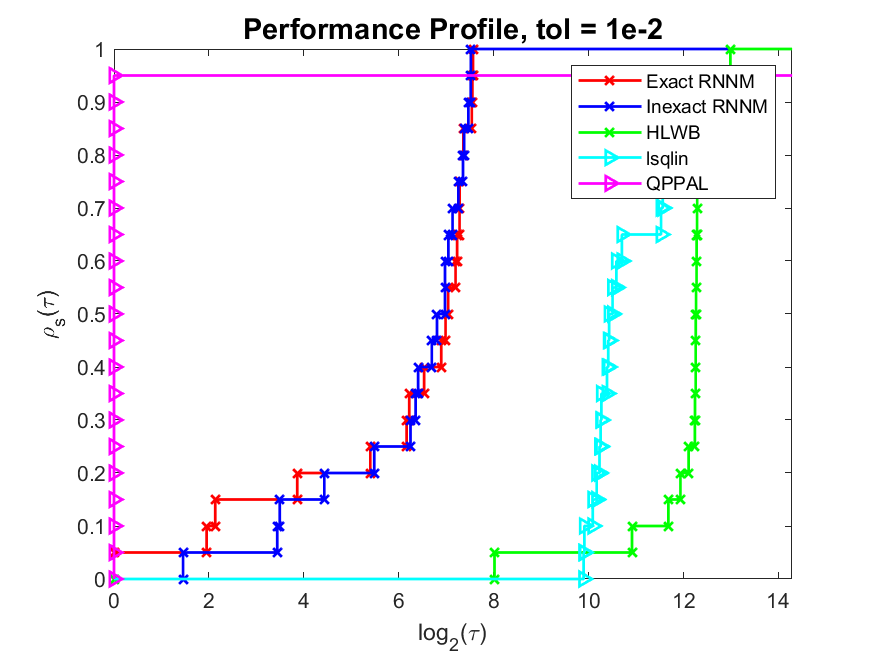}
	\label{figapp: nondegenerate PFsizeM1}
	\caption{tol = $10^{-2}$}
\end{subfigure}
\begin{subfigure}[t]{0.95\linewidth}
	\centering
	\includegraphics[height =
	0.26\textheight, keepaspectratio]{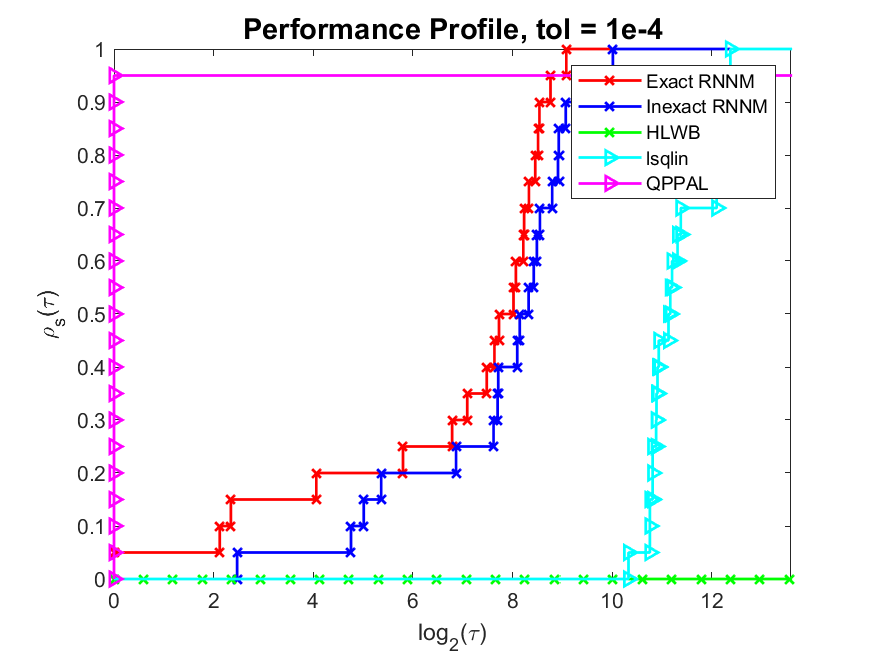}
	\label{figapp: nondegenerate PFsizeM2}
	\caption{tol = $10^{-4}$}
\end{subfigure}

\begin{subfigure}[t]{0.95\linewidth}
	\centering
	\includegraphics[height =
	0.26\textheight, keepaspectratio]{PFsizeM3.png}
	\label{figapp: nondegenerate PFsizeM3}
	\caption{tol = $10^{-14}$}
\end{subfigure}

\caption{Performance Profiles for varying $m$ for nondegenerate vertex solutions.}
\end{figure}

\newpage

\begin{figure}[H]
\centering

\begin{subfigure}[t]{0.95\linewidth}
	\centering
	\includegraphics[height =
	0.26\textheight, keepaspectratio]{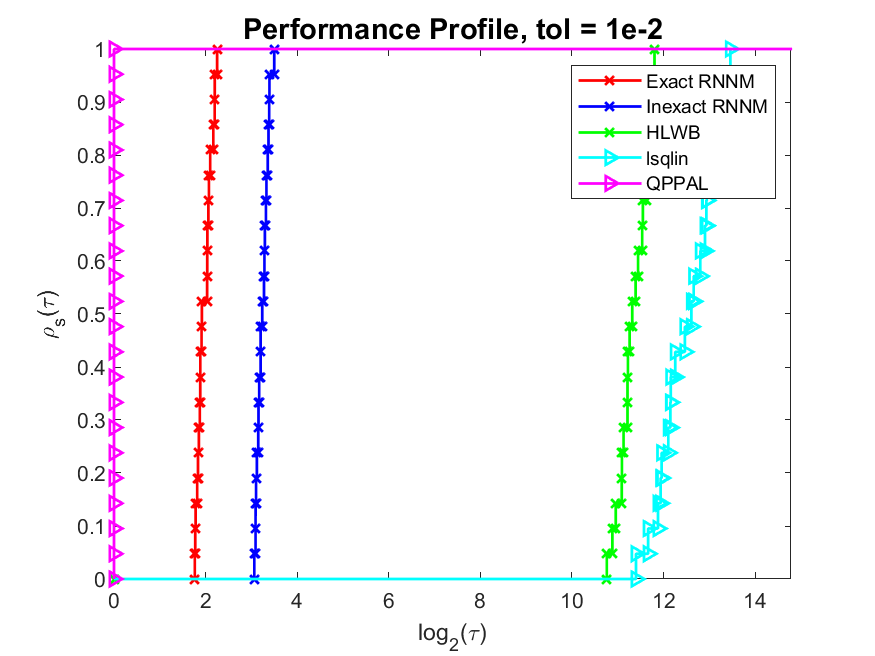}
	\label{figapp: nondegenerate PFsizeN1}
	\caption{tol = $10^{-2}$}
\end{subfigure}
\begin{subfigure}[t]{0.95\linewidth}
	\centering
	\includegraphics[height =
	0.26\textheight, keepaspectratio]{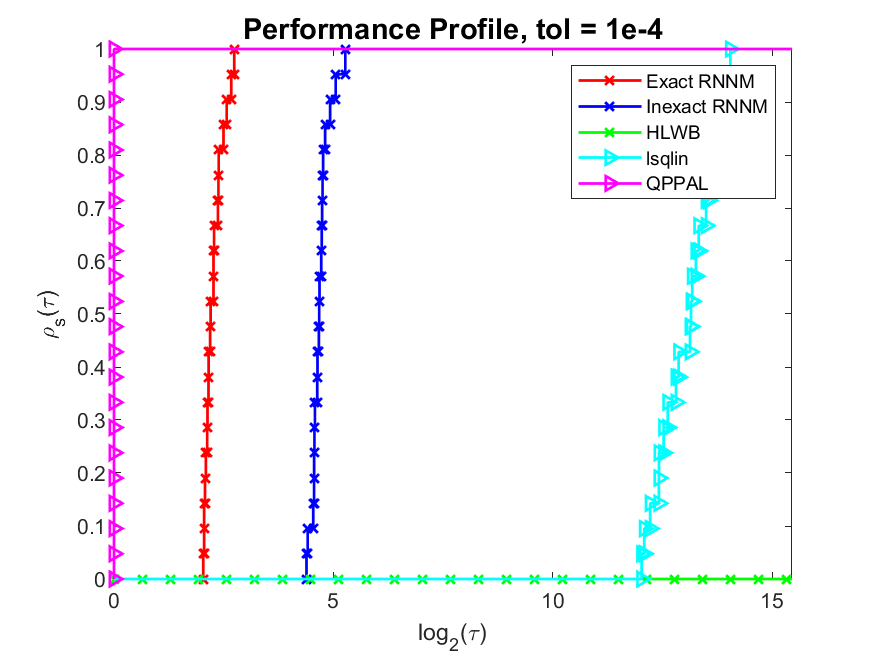}
	\label{figapp: nondegenerate PFsizeN2}
	\caption{tol = $10^{-4}$}
\end{subfigure}

\begin{subfigure}[t]{0.95\linewidth}
	\centering
	\includegraphics[height =
	0.26\textheight, keepaspectratio]{PFsizeN3.png}
	\label{figapp: nondegenerate PFsizeN3}
	\caption{tol = $10^{-14}$}
\end{subfigure}

\caption{Performance Profiles for varying $n$ for nondegenerate vertex solutions.}
\end{figure}

\newpage

\begin{figure}[H]
\centering

\begin{subfigure}[t]{0.95\linewidth}
	\centering
	\includegraphics[height =
	0.26\textheight, keepaspectratio]{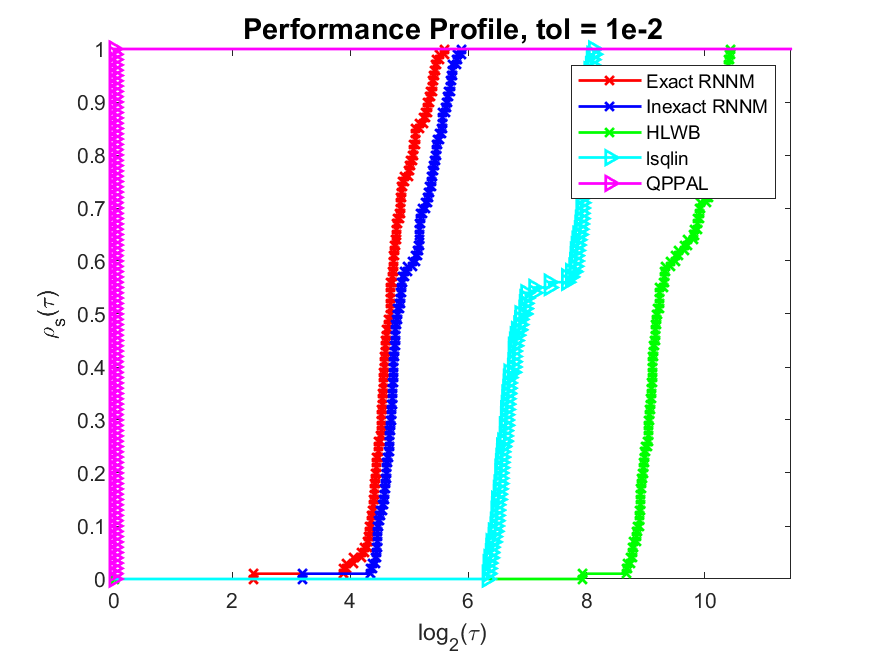}
	\label{figapp: nondegenerate PFdense1}
	\caption{tol = $10^{-2}$}
\end{subfigure}
\begin{subfigure}[t]{0.95\linewidth}
	\centering
	\includegraphics[height =
	0.26\textheight, keepaspectratio]{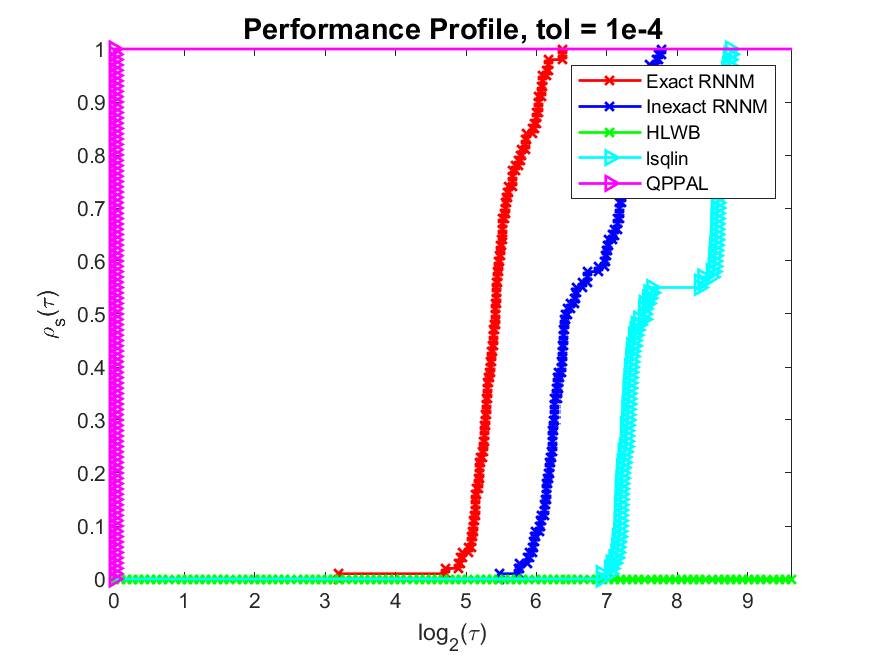}
	\label{figapp: nondegenerate PFdense2}
	\caption{tol = $10^{-4}$}
\end{subfigure}

\begin{subfigure}[t]{0.95\linewidth}
	\centering
	\includegraphics[height =
	0.26\textheight, keepaspectratio]{PFdense3.png}
	\label{figapp: nondegenerate PFdense3}
	\caption{tol = $10^{-14}$}
\end{subfigure}

\caption{Performance Profiles for varying density for nondegenerate vertex solutions.}
\end{figure}

\subsection{Degenerate}

\begin{table}[H]
{\small
\flushleft
\caption{Varying problem sizes $m$ and comparing computation time with relative residual for degenerate vertex solutions.}
\scalebox{0.8}{

}
\label{table: degenerate sizeM3}
}
\end{table}

\begin{table}[H]
{\small
\flushleft
\caption{Varying problem sizes $n$ and comparing computation time with relative residual for degenerate vertex solutions.}
\scalebox{0.8}{

}
\label{table: degenerate sizeN3}
}
\end{table}

\vspace{.1in}

\begin{table}[H]
{\small
\flushleft
\caption{Varying problem density and comparing computation time with relative residual for degenerate vertex solutions.}
\scalebox{0.8}{

}
\label{table: degenerate dense3}
}
\end{table}

\begin{figure}[H]
\centering

\begin{subfigure}[t]{0.95\linewidth}
	\centering
	\includegraphics[height =
	0.26\textheight, keepaspectratio]{PFsizeM1.png}
	\label{figapp: degenerate PFsizeM1}
	\caption{tol = $10^{-2}$}
\end{subfigure}
\begin{subfigure}[t]{0.95\linewidth}
	\centering
	\includegraphics[height =
	0.26\textheight, keepaspectratio]{PFsizeM2.png}
	\label{figapp: degenerate PFsizeM2}
	\caption{tol = $10^{-4}$}
\end{subfigure}

\begin{subfigure}[t]{0.95\linewidth}
	\centering
	\includegraphics[height =
	0.26\textheight, keepaspectratio]{PFsizeM3.png}
	\label{figapp: degenerate PFsizeM3}
	\caption{tol = $10^{-14}$}
\end{subfigure}

\caption{Performance Profiles for varying $m$ for degenerate vertex solutions.}
\end{figure}

\newpage

\begin{figure}[H]
\centering

\begin{subfigure}[t]{0.95\linewidth}
	\centering
	\includegraphics[height =
	0.26\textheight, keepaspectratio]{PFsizeN1.png}
	\label{figapp: degenerate PFsizeN1}
	\caption{tol = $10^{-2}$}
\end{subfigure}
\begin{subfigure}[t]{0.95\linewidth}
	\centering
	\includegraphics[height =
	0.26\textheight, keepaspectratio]{PFsizeN2.png}
	\label{figapp: degenerate PFsizeN2}
	\caption{tol = $10^{-4}$}
\end{subfigure}

\begin{subfigure}[t]{0.95\linewidth}
	\centering
	\includegraphics[height =
	0.26\textheight, keepaspectratio]{PFsizeN3.png}
	\label{figapp: degenerate PFsizeN3}
	\caption{tol = $10^{-14}$}
\end{subfigure}

\caption{Performance Profiles for varying $n$ for degenerate vertex solutions.}
\end{figure}

\newpage

\begin{figure}[H]
\centering

\begin{subfigure}[t]{0.95\linewidth}
	\centering
	\includegraphics[height =
	0.26\textheight, keepaspectratio]{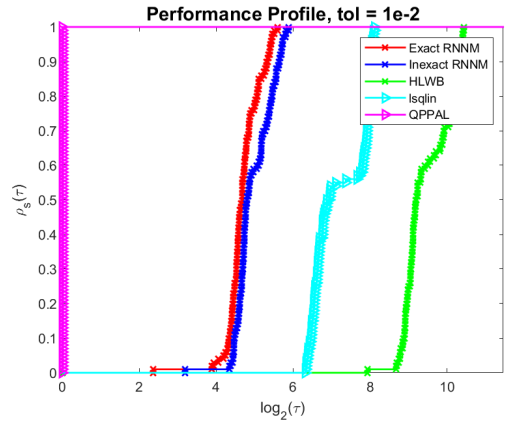}
	\label{figapp: degenerate PFdense1}
	\caption{tol = $10^{-2}$}
\end{subfigure}
\begin{subfigure}[t]{0.95\linewidth}
	\centering
	\includegraphics[height =
	0.26\textheight, keepaspectratio]{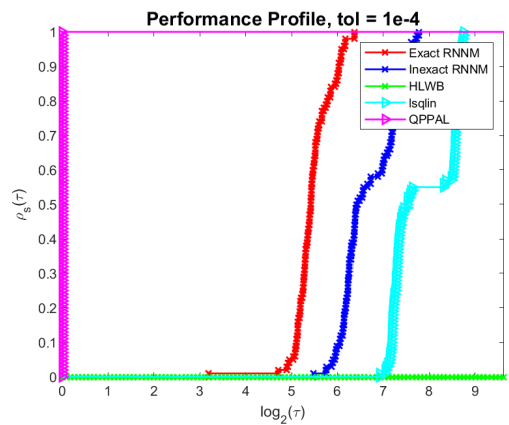}
	\label{figapp: degenerate PFdense2}
	\caption{tol = $10^{-4}$}
\end{subfigure}

\begin{subfigure}[t]{0.95\linewidth}
	\centering
	\includegraphics[height =
	0.26\textheight, keepaspectratio]{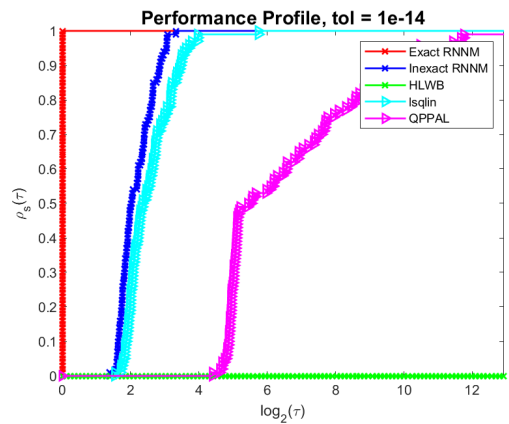}
	\label{figapp: degenerate PFdense3}
	\caption{tol = $10^{-14}$}
\end{subfigure}

\caption{Performance Profiles for varying density for degenerate vertex solutions.}
\end{figure}

\section{Applications of the \BAP and the \HLWB algorithm\label{subsec:Applications}}

The \BAP and the \HLWB algorithm play important roles in mathematical
and technological problems. We give two examples.

\begin{enumerate}
\item {\bf Finding best approximation pairs for two intersections of
closed convex sets} 
\\The problem of finding a best approximation pair
of two sets, which in turn generalizes the well-known convex feasibility
problem \cite{MR98f:90045}, has a long history that dates
back to work by Cheney and Goldstein in 1959 \cite{cheney-goldstein1959}.
This problem was recently revisited in \cite{ACJ2018} where an alternating
\HLWB (A-HLWB) algorithm was proposed and studied that can be used
when the two sets are finite intersections of half-spaces. Motivated
by that \cite{bauschke-on-ACJ-2022} presented alternative algorithms
that utilize projection and proximity operators. Their modeling framework
is able to accommodate even convex sets and their numerical experiments
indicate that these methods are competitive and in some cases superior
to the A-HLWB algorithm. The practical importance of the problem of
finding a best approximation pair of two sets stems from its relevance
to real-world situations wherein the feasibility-seeking modeling
is used and there are two disjoint constraints sets. One set represents
``hard'' constraints, i.e., constraints the must be met, while the
other set represents ``soft'' constraints which should be observed
as much as possible, see, e.g., \cite{hard-1999}. Under such circumstances,
the desire to find a point in the hard constraints set that will be
closest to the set of soft constraints leads to the problem of finding
a best approximation pair of the two sets.

\item
{\bf Least intensity modulated treatment plan in radiotherapy}
In the fully-discretized modelling of the intensity-modulated radiation
therapy (IMRT) treatment planning problem the irradiated body is
discretized into voxels and the external radiation field is discretized
into beamlets. This is represented by a system of linear inequalities
as in \cref{eq:Hihalfspace} with nonnegativity
constraints. The unknown vector $x$ represents radiation intensities
and if it is a solution of the linear feasibility problem then it
fulfills all the planning prescriptions dictated by the oncologist.
In such a feasibility-seeking approach several solutions are acceptable
but a solution that is closest to the origin will use the least possible
intensities that still fulfill the constraints. Delivering an acceptable
treatment plan with less radiation intensities is preferable and so
one replaces the feasibility-seeking problem by a \BAP of approximating
the origin by a point from the feasible sets, i.e., by seeking the
projection of the origin onto the feasible set. Such an approach was
used, e.g., in \cite{LIF-2003} where a simultaneous version of Hildreth’s
sequential algorithm for norm minimization over linear inequalities,
\cite{Hildreth,LC-hildreth1980}, \cite[Algorithm 6.5.2]{censor-zenios-book}
was combined with a norm-minimizing image reconstruction algorithm
of Herman and Lent \cite{ART4}, called ART4 (Algebraic Reconstruction
Technique 4), which handles in a special effective manner interval
inequalities.
\end{enumerate}

\section*{Data Availability and Conflict of Interest Statement}
The codes for generating both the data and the output is available at\\
\href{https://www.math.uwaterloo.ca/%7Ehwolkowi/henry/reports/ABSTRACTS.html}{
the paper link at URL www.math.uwaterloo.ca/\~{
}hwolkowi/henry/reports/ABSTRACTS.html} or by request from one of the
authors.

The authors declare no competing interests.

\bibliographystyle{plain}

\cleardoublepage
\label{ind:index}
\printindex
\addcontentsline{toc}{section}{Index}

%\bibliography{.master,.edm,.psd,.bjorBOOK,haoh,.bauschke,.hare}
\def\udot#1{\ifmmode\oalign{$#1$\crcr\hidewidth.\hidewidth
  }\else\oalign{#1\crcr\hidewidth.\hidewidth}\fi} \def\cprime{$'$}
  \def\cprime{$'$} \def\cprime{$'$} \def\cprime{$'$}

\addcontentsline{toc}{section}{Bibliography}
\label{endofpaperpg}

\end{document}